 \newtheorem{theorem}{Theorem}[section]
 \newtheorem{corollary}[theorem]{Corollary}
 \newtheorem{lemma}[theorem]{Lemma}
 \newtheorem{proposition}[theorem]{Proposition} 
 \theoremstyle{definition}
 \newtheorem{definition}[theorem]{Definition}
 \newtheorem{remark}[theorem]{Remark}
 \newtheorem{notation}[theorem]{Notation}
 \theoremstyle{remark}
 \numberwithin{equation}{section}
\newcommand{\be}{\mathbb E}
\newcommand{\bn}{\mathbb N}
\newcommand{\Nk}{\bn_0^k}
\newcommand{\ot}{\otimes}
\newcommand {\id} {{\textrm{id}}}
\newcommand{\wt}{\widetilde}
\newcommand{\wT}{\wt{T}}
\newcommand{\onek}{\{1,\ldots,k\}}
\begin{document}

\title[Generating wandering subsp. for doubly comm. cov. repr.]
 {Generating wandering subspaces for doubly commuting covariant representations}

\date{\today}

%\author[Sarkar]{Jaydeb Sarkar}
%\address{ Statistics and Mathematics Unit, Indian Statistical Institute, Bangalore center, 8th Mile, Mysore Road, Bangalore, 560059, India}
%\email{jay@isibang.ac.in, jaydeb@gmail.com}

%\author[Trivedi]{Harsh Trivedi}
%\address{Department of Mathematics, The LNM Institute of Information Technology, Rupa Ki Nangal, P.O. Sumel, Jaipur, Rajasthan-302031, India.}
%\email{harshtrivedi.gn@socet.edu.in, trivediharsh26@gmail.com}

%\author[Veerabathiran]{Shankar Veerabathiran}
%\address{Ramanujan Institute for Advanced Study in Mathematics, University of Madras, Chennai (Madras) 600005, India}
%\email{shankarunom@gmail.com}

%\subjclass[2010]{46L08, 47A13, 47A15, 47B38, 47L30, 47L55, 47L80}
%\keywords{Hilbert $C^*$-modules, covariant representations, subproduct systems, tuples of operators, invariant subspaces, wandering subspaces, curvatures}

%\begin{abstract}
%\end{abstract}

\author[Trivedi]{Harsh Trivedi\textsuperscript{*}}

\address{The LNM Institute of Information Technology, Rupa ki Nangal, Post-Sumel, Via-Jamdoli
Jaipur-302031,
(Rajasthan) INDIA}
\email{harsh.trivedi@lnmiit.ac.in, trivediharsh26@gmail.com}

\author[Veerabathiran]{Shankar Veerabathiran}
\address{Ramanujan Institute for Advanced Study in Mathematics,
University of Madras, Chennai (Madras) 600005, India}
\email{shankarunom@gmail.com}
\thanks{*corresponding author}

%\tableofcontents

\begin{abstract}

We obtain a Halmos-Richter-type wandering subspace theorem for covariant representations of $C^*$-correspondences.
Further the notion of Cauchy dual and a version of Shimorin's Wold-type decomposition for covariant representations of $C^*$-correspondences is explored and as an application a wandering subspace theorem for doubly commuting covariant representations is derived. Using this wandering subspace theorem generating wandering subspaces are characterized for covariant representations of product systems in terms of the doubly commutativity condition.
\end{abstract}

\keywords{Hilbert $C^*$-modules, isometry, covariant representations,
product systems, tuples of operators, doubly commuting, Nica covariance,  Shimorin property,
wandering subspaces, Wold decomposition, Fock space}
\subjclass[2010]{46L08, 47A13, 47A15, 47B38, 47L30, 47L55, 47L80.}

\maketitle
\section{Introduction}

Beurling's theorem \cite{B49} says that if $\mathcal K$ is a closed shift invariant subspace of the Hardy space $H^2(\mathbb D)$ then $\mathcal K$ is the image of an inner function. Wold \cite{W} gave a decomposition of isometries: ``Every isometry on a Hilbert space is either a shift, or a unitary, or decomposes uniquely as a direct sum of a shift and a unitary."  In fact, the Beurling's theorem is a corollary to the Wold decomposition (cf. \cite{D11}).

In \cite{R69}, Rudin explained that the Beurling's theorem fails in general in the multivariable case. However, using the Wold-type decomposition due to S{\l}oci{\'n}ski \cite{Sl80} for a pair of doubly commuting isometries, Mandrekar \cite{M88} proved a version of the Beurling's theorem for the Hardy space over the bidisc $H^2(\mathbb D^2)$. Sarkar, Sasane and Wick \cite{S13} proved a Beurling-type theorem in the polydisc case and their analysis is based on a generalization of the result of S{\l}oci{\'n}ski given in \cite{S14}. In \cite{ARS96}, Aleman, Richter and Sundberg obtained a Beurling-type theorem for the Bergman space $A^2(\mathbb D).$ Shimorin \cite{S01} provided an elementary proof of this result by considering a Wold-type decomposition for operators close to isometries. Shimorin's result is generalized in \cite{RT10} for the Bergman space $A^2(\mathbb D^2)$ and in \cite{CDSS14} for the Bergman space $A^2(\mathbb D^n).$

In recent years, there has been several papers on the Wold decomposition in the multivariable Operator Theory, see \cite{MS99,Po89,S14,SZ08,Sl80,V11,TV19}. Pimsner \cite{P97} extended the notion of Cuntz algebras \cite{C77} and showed that a covariant representation of a $C^*$-correspondence $E$ extends to the Toeplitz algebra of $E$ if and only if the covariant representation is isometric. Muhly and Solel \cite{MS99} explored the Wold decomposition, for $C^*$-representations of tensor algebras of $C^*$-correspondences, and also analyzed the invariant subspace structure of certain
subalgebras of Cuntz-Krieger algebras. The terminology of the tensor product system of Hilbert spaces has been used by Arveson \cite{A89} to classify $E_0$-semigroups. Discrete product system of $C^*$-correspondences is studied by Fowler \cite{F02}. Solel \cite{S08} defined the notion of doubly commuting covariant representations of product systems of $C^*$-correspondences and explored their regular dilations.
Wold decomposition for the doubly commuting isometric covariant representations, which is a higher rank version of the S{\l}oci{\'n}ski decomposition, is due to Skalski-Zacharias \cite{SZ08}.

One of the well-known implication of the Wold decomposition is that the wandering subspace of the shift is the kernel of its adjoint.  Halmos \cite{H61} characterized all the shift invariant subspaces by proving a wandering subspace theorem which states that there is a one-to-one correspondence between the set of all wandering subspaces and the set of all invariant subspaces. Richter \cite{R88} obtained a wandering subspace theorem for the general case of analytic operators satisfying certain condition. In Section \ref{Sec2}, a version of Richter's wandering subspace theorem for covariant representations of $C^*$-correspondences is proved. Based on the computations of Section \ref{Sec2}, in Section \ref{Sec3} we explore the notion of Cauchy dual and an analogue of Shimorin's Wold-type decomposition. Using the results of Section \ref{Sec3}, in the last section we generalize the main theorem of \cite{CDSS14}, which is a wandering subspace theorem for doubly commuting bounded operators, for the doubly commuting covariant representations.

\subsection{Preliminaries and Notations}
We assume the elementary theory of Hilbert $C^*$-modules (cf. \cite{MR0355613, La95}). Moreover, in this subsection we survey elementary definitions and properties related to covariant representations of
$C^*$-correspondences (cf. \cite{P97,MR1648483}) and recall the Wold-type decomposition for isometric covariant representations from \cite{H16, MS99}.

Assume $E$ to be a Hilbert
$C^*$-module over a $C^*$-algebra $\mathcal M.$ Let $\mathcal L(E)$ denote the $C^*$-algebra of all
adjointable operators on $E$. We call $E$ a {\it
$C^*$-correspondence over $\mathcal M$} if $E$ has a left $\mathcal
M$-module structure defined by a non-zero $*$-homomorphism
$\phi:\mathcal M\to \mathcal L(E)$ in the following sense
\[
a\xi:=\phi(a)\xi \quad \quad (a\in\mathcal M, \xi\in E).
\]
Every $*$-homomorphism considered in this paper is
essential, that is, the closed linear span of
$\phi(\mathcal M)E$ is $E.$ Every $C^*$-correspondence has usual operator space structure induced from viewing it as a corner in corresponding linking algebra. If $F$ is a
$C^*$-correspondence over $\mathcal M,$ then we get the notion of
tensor product $F\otimes_{\phi} E$  which has the properties
\[
(\zeta_1 a)\otimes \xi_1=\zeta_1\otimes \phi(a)\xi_1,
\]
\[
\langle\zeta_1\otimes\xi_1,\zeta_2\otimes\xi_2\rangle=\langle\xi_1,\phi(\langle\zeta_1,\zeta_2\rangle)\xi_2\rangle
\]
for every $\zeta_1,\zeta_2\in F;$ $\xi_1,\xi_2\in E$ and $a\in\mathcal
M.$  

Unless it is necessary, the next section onwards we denote the tensor product $F\otimes_{\phi} E$  simply by  $F\otimes E.$  
\begin{definition}
Assume $\mathcal H$ to be a Hilbert
space, and $E$ to be a $C^*$-correspondence over a $C^*$-algebra $\mathcal M$. Consider a representation
$\sigma:\mathcal M\to B(\mathcal H)$ and a linear map $T:
E\to B(\mathcal H).$ Then the tuple $(\sigma, T)$ is
called a {\it covariant representation} of $E$ on $\mathcal H$ (cf. \cite{MR1648483}) if
\[
T(a\xi b)=\sigma(a)T(\xi)\sigma(b) \quad \quad (\xi\in E;
a,b\in\mathcal M).
\]
We call the covariant representation $(\sigma, T)$ a {\it completely bounded} ( resp. {\it completely
contractive}) if $T$ is completely bounded (resp. completely contractive). Further, it is called {\it isometric} or {\it Toeplitz} if
\[
T(\xi)^*T(\zeta)=\sigma(\langle \xi,\zeta\rangle) \quad \quad
(\xi,\zeta\in E).
\]
\end{definition}

\begin{lemma}{\rm (\cite[Lemma
3.5]{MR1648483})}\label{MSL}
The map $(\sigma, T)\mapsto \widetilde T$ gives a bijection
between the collection of all completely bounded (resp.completely contractive), covariant
representations $(\sigma, T)$ of $E$ on $\mathcal H$ and the
collection of all bounded (resp. contractive) linear maps
$\widetilde{T}:~\mbox{$E\otimes_{\sigma} \mathcal H\to \mathcal
H$}$ defined by
\[
\widetilde{T}(\xi\otimes h):=T(\xi)h \quad \quad (\xi\in E,
h\in\mathcal H),
\]
and satisfying $\widetilde{T}(\phi(a)\otimes I_{\mathcal
H})=\sigma(a)\widetilde{T}$, $a\in\mathcal M$. Moreover, $\widetilde
T$ is an isometry if and only if the covariant representation $(\sigma, T)$ is isometric.
\end{lemma}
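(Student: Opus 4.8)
The plan is to establish the two directions of the correspondence $(\sigma,T)\mapsto\widetilde{T}$ separately and then treat the isometry clause. I would fix the interior tensor product $E\otimes_\sigma\mathcal{H}$, whose inner product on elementary tensors is $\langle \xi\otimes h,\eta\otimes k\rangle=\langle h,\sigma(\langle\xi,\eta\rangle)k\rangle$ and whose balancing relation is $\xi a\otimes h=\xi\otimes\sigma(a)h$ for $a\in\mathcal{M}$. Starting from a completely bounded covariant representation $(\sigma,T)$, the covariance identity $T(a\xi b)=\sigma(a)T(\xi)\sigma(b)$ specialises, using an approximate identity of $\mathcal{M}$ together with the essentiality of $\phi$ and nondegeneracy of $\sigma$, to the one-sided relations $T(\xi a)=T(\xi)\sigma(a)$ and $T(a\xi)=\sigma(a)T(\xi)$. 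The first shows that $\widetilde{T}(\xi\otimes h)=T(\xi)h$ respects the balancing relation, so $\widetilde{T}$ is well defined on the algebraic balanced tensor product; the second yields the intertwining $\widetilde{T}(\phi(a)\otimes I_{\mathcal{H}})=\sigma(a)\widetilde{T}$ at once.

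The main work is boundedness, and this is the step I expect to be the real obstacle. For a finite sum $v=\sum_i\xi_i\otimes h_i$ with $\mathbf{h}=(h_1,\dots,h_n)$ I would compute $\|\widetilde{T}v\|^2=\langle\mathbf{h},[T(\xi_i)^*T(\xi_j)]_{ij}\,\mathbf{h}\rangle$ and $\|v\|^2=\langle\mathbf{h},[\sigma(\langle\xi_i,\xi_j\rangle)]_{ij}\,\mathbf{h}\rangle$ in $\mathcal{H}^n$. Hence $\widetilde{T}$ extends to a contraction exactly when $[T(\xi_i)^*T(\xi_j)]_{ij}\le[\sigma(\langle\xi_i,\xi_j\rangle)]_{ij}$ in $M_n(B(\mathcal{H}))$ for every finite family $\{\xi_i\}$, and in general $\|\widetilde{T}\|$ is the supremum of the ratio of these two quadratic forms. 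The key point is that, under the operator space structure induced on $E$ by its realisation as a corner of the linking algebra, this family of matrix inequalities is precisely the assertion that $T$ is completely contractive, and the corresponding supremum computes $\|T\|_{cb}$; establishing the identification $\|\widetilde{T}\|=\|T\|_{cb}$ (so that contractive corresponds to completely contractive and bounded to completely bounded) is the crux of the lemma.

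For the reverse direction, given a bounded $\widetilde{T}:E\otimes_\sigma\mathcal{H}\to\mathcal{H}$ satisfying the intertwining condition, I would define $T(\xi)\in B(\mathcal{H})$ by $T(\xi)h:=\widetilde{T}(\xi\otimes h)$; linearity in $\xi$ and the bound $\|T(\xi)\|\le\|\widetilde{T}\|\,\|\xi\|$ follow from $\|\xi\otimes h\|^2=\langle h,\sigma(\langle\xi,\xi\rangle)h\rangle\le\|\xi\|^2\|h\|^2$. Covariance $T(a\xi b)=\sigma(a)T(\xi)\sigma(b)$ then follows by combining the intertwining relation (for the left factor $a$) with the balancing relation $\xi b\otimes h=\xi\otimes\sigma(b)h$ (for the right factor $b$), and the quadratic-form computation of the previous paragraph, run in reverse, shows $(\sigma,T)$ is completely bounded (resp.\ completely contractive) whenever $\widetilde{T}$ is bounded (resp.\ contractive). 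Finally one checks that the two assignments are mutually inverse, since both are pinned down on elementary tensors by the single formula $T(\xi)h=\widetilde{T}(\xi\otimes h)$.

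For the isometry statement I would observe that $\widetilde{T}$ is an isometry iff $\|\widetilde{T}v\|=\|v\|$ for all $v$, which by the two form expressions above is equivalent to $[T(\xi_i)^*T(\xi_j)]_{ij}=[\sigma(\langle\xi_i,\xi_j\rangle)]_{ij}$ for every finite family. Taking $n=1$ already forces $T(\xi)^*T(\xi)=\sigma(\langle\xi,\xi\rangle)$, and polarisation in $\xi$ then gives $T(\xi)^*T(\zeta)=\sigma(\langle\xi,\zeta\rangle)$, which is exactly the isometric (Toeplitz) condition; conversely this identity makes the two Gram matrices coincide for all finite families, so the forms agree and $\widetilde{T}$ preserves norms. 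This closes the equivalence.
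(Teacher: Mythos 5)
First, a point of order: the paper does not prove this lemma at all --- it is quoted verbatim from Muhly--Solel \cite[Lemma 3.5]{MR1648483} and used as a black box --- so there is no in-paper proof to compare against; I am judging your attempt against the standard argument. Your overall architecture (well-definedness via $T(\xi a)=T(\xi)\sigma(a)$, the intertwining via $T(a\xi)=\sigma(a)T(\xi)$, the reverse direction, and the isometry clause via the $n=1$ case plus polarisation) is sound, and your reduction of contractivity of $\widetilde{T}$ to the family of operator inequalities $[T(\xi_i)^*T(\xi_j)]_{ij}\le[\sigma(\langle\xi_i,\xi_j\rangle)]_{ij}$ is correct. The reverse direction is also essentially complete: from $\|\widetilde{T}\|\le 1$ one gets, for $X=[\xi_{ij}]\in M_n(E)$, the bound $\|[T(\xi_{ij})]\mathbf{h}\|^2=\sum_i\|\widetilde{T}(\sum_j\xi_{ij}\otimes h_j)\|^2\le\langle\mathbf{h},\sigma^{(n)}(X^*X)\mathbf{h}\rangle\le\|X\|^2\|\mathbf{h}\|^2$, which is complete contractivity.

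The genuine gap is at the step you yourself flag as the crux, and it is not merely a matter of "establishing an identification": you assert that the family of operator inequalities $[T(\xi_i)^*T(\xi_j)]_{ij}\le[\sigma(\langle\xi_i,\xi_j\rangle)]_{ij}$ \emph{is precisely} the statement that $T$ is completely contractive for the linking-algebra operator space structure. It is not. Complete contractivity applied to the row $(\xi_1,\dots,\xi_n)\in M_{1,n}(E)$ yields only the \emph{norm} inequality $\|[T(\xi_i)^*T(\xi_j)]\|\le\|[\langle\xi_i,\xi_j\rangle]\|$, and for positive matrices $\|A\|\le\|B\|$ does not give $A\le B$; so the forward implication (complete contractivity $\Rightarrow\|\widetilde{T}\|\le 1$) does not follow from your matrix computation alone. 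The missing argument is a factorisation that uses the covariance in an essential way: with $G=[\langle\xi_i,\xi_j\rangle]$ and $\epsilon>0$, write the row $X=(\xi_i)$ as $X=Y(G+\epsilon)^{1/2}$ where $Y=X(G+\epsilon)^{-1/2}$ satisfies $Y^*Y\le 1$, use $T(\xi a)=T(\xi)\sigma(a)$ to get $\sum_iT(\xi_i)h_i=\sum_jT(\eta_j)\bigl(\sigma^{(n)}((G+\epsilon)^{1/2})\mathbf{h}\bigr)_j$, apply row-contractivity of $T$ to the normalised row $Y=(\eta_j)$, and let $\epsilon\to 0$ to obtain $\|\widetilde{T}v\|\le\|T\|_{cb}\|v\|$. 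Without the bimodule property this implication can fail, so the covariance is doing real work here that your sketch does not invoke. With that step supplied, the rest of your outline (including the isometry clause) goes through.
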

We say that the covariant representation $(\sigma, T)$ is {\it fully co-isometric} if $\widetilde{T}$ is co-isometric, that is, $\widetilde{T}\widetilde{T}^*=I_{\mathcal{H}}.$

Consider a $C^*$-correspondence $E$ over a $C^*$-algebra $\mathcal M.$ Then for every $n\in \mathbb{N}$,
$E^{\otimes n}: =E\otimes_{\phi} \cdots \otimes_{\phi}E$ ($n$ fold tensor product)  is  the $C^*$-correspondence over the  $C^*$-algebra $\mathcal M$, where the left action  of $\mathcal{M}$ on $E^{\otimes n}$  is defined by  $$\phi^n(a)(\xi_1 \otimes \cdots \otimes \xi_n):=\phi(a)\xi_1\otimes \cdots \otimes\xi_n.$$   We use $\mathbb N_0 :=\mathbb N\cup\{0\}$ and $E^{\otimes 0}:=\mathcal{M}$.

For each $ n \in \mathbb{N}$ define the map $\wT _n: E^{\ot n}\otimes_{\sigma} \mathcal H\to \mathcal H$ by
\[ \wT_n (\xi_1 \ot \cdots \ot \xi_n \ot h) = T (\xi_1) \cdots T(\xi_n) h\]
 where $\xi_1, \ldots, \xi_n\in E, h \in \mathcal H$. We get,
\begin{equation}\label{eqnn}\wT_n=\wT(I_{E} \otimes \wT) \cdots (I_{E^{\otimes n-1}} \otimes  \wT ). \end{equation}

The {\it Fock module} $\mathcal{F}(E):= \bigoplus_{n \in \mathbb N_0}E^{\otimes n}$ is the  $C^*$-correspondence over a $C^*$-algebra  $\mathcal M$, where the left action  of $\mathcal{M}$ on $\mathcal{F}(E)$ is  denoted by $\phi_{\infty}: \mathcal{M} \longrightarrow L(\mathcal{F}(E))$  and it is defined by $$\phi_{\infty}(a)(\oplus_{n \in \mathbb N_0}\xi_n):=\oplus_{n \in \mathbb N_0}\phi_{n}(a)\xi_n , \:\: \xi_n \in E^{\otimes n}.$$
For $\xi \in E$, we define the {\it creation operator} $T_{\xi}$ on $\mathcal{F}(E)$ by $$T_{\xi}(\eta):=\xi \otimes \eta, \:\: \eta \in E^{\otimes n}, n \in \mathbb N_0.$$

The following definition of induced representation is a generalization of the multiplication operator $M_z\otimes I_{\mathcal{H}}$ on the vector valued Hardy spaces $H^2(\mathbb D)\otimes \mathcal{H}$:
\begin{definition}
Let $\pi $ denote a representation of a $C^*$-algebra $\mathcal{M}$ on a Hilbert space $\mathcal{H}$. Let $E$ be a $C^*$-correspondence over $\mathcal M.$ The isometric covariant representation $(\rho, S)$ of $E$ on the Hilbert space $\mathcal{F}(E)\otimes_{\pi}\mathcal{H}$ defined as
\begin{align*}
\rho(a):&=\phi_{\infty}(a) \otimes I_{\mathcal{H}} \:\:, a \in \mathcal{M}&\\
S(\xi):&=T_{\xi}\otimes I_{\mathcal{H}} ,\:\: \xi \in E.
\end{align*}
is called an {\it induced representation} (cf. \cite{R74}) (induced  by $\pi$).
\end{definition}

\begin{definition}	
	\begin{itemize}
\item[(i)] Let $(\sigma, T)$ be a completely bounded, covariant representation of $E$ on a Hilbert space $\mathcal{H},$ and let  $\mathcal K$ be a closed subspace of $ \mathcal H.$  The subspace $\mathcal K$ is called $(\sigma, T)$-{\it invariant}(resp. $(\sigma, T)$-{\it reducing}) (cf. \cite{SZ08}) if it   is $\sigma(\mathcal M)$-invariant (i.e., the projection onto $\mathcal K$, will be denoted throughout by $P_{\mathcal K}$, lies  in
	$\sigma(\mathcal M)')$, and if $\mathcal K$ (resp. both $ \mathcal{K}, \mathcal{K}^{\bot}$) is  left invariant by each operator $T (\xi)$ for $\xi \in E.$ Then  the natural restriction of this representation provides a new representation of $E$ on $\mathcal{K}$ and it will denoted by $(\sigma, T)|_{\mathcal{K}}.$

\item[(ii)]  For a closed $\sigma(\mathcal{M})$-invariant subspace $\mathcal{W},$ we define $$\mathfrak{L}_{n}(\mathcal{W}):=\bigvee \{T(\xi_1)T(\xi_2)\cdots T(\xi_n)h \: : \: \xi_i\in E , h \in \mathcal{W} \},$$ for $n\in\mathbb N$ and $\mathfrak{L}_{0}(\mathcal{W}):=\mathcal{W}$. Then $\mathcal W$ is called {\it wandering} (cf.  \cite{H16}) for $(\sigma , T)$, if  $\mathcal{W}$ is orthogonal to  $\mathfrak{L}_{n}(\mathcal{W}),$ for all $n \in \mathbb{N}.$ We say that $(\sigma, T)$ has {\it generating wandering subspace property} if
$$\mathcal{H}=\bigvee_{n\geq0}\mathfrak{L}_n(\mathcal{W}),~\mbox{where $\mathcal{W}=\mbox{ker}\wt{T}^*.$}$$
\end{itemize}

\end{definition}

%We use notation $I$ for $I_{\mathcal H}.$

\begin{theorem}{\rm ({\bf Muhly and Solel})}\label{ThmW}
Let $(\sigma , T)$ be an isometric, covariant representation of $E$  on a Hilbert space $\mathcal{H}$. Then the representation $(\sigma, T)$ decomposes into a direct sum $(\sigma_1, T_1)\bigoplus(\sigma_2, T_2)$ on $\mathcal{H}=\mathcal{H}_1 \bigoplus \mathcal{H}_2$ where $(\sigma_1, T_1)=(\sigma, T)|_{\mathcal{H}_1}$ is an induced representation and $(\sigma_2, T_2)=(\sigma, T)|_{\mathcal{H}_2}$ is fully coisometric. The above decomposition is unique in the sense that if $\mathcal{K}$  reduces $(\sigma, T),$  and if the restriction $(\sigma, T)|_{\mathcal{K}}$ is induced (resp. fully coisometric), then $\mathcal{K} \subseteq \mathcal{H}_1$(resp. $\mathcal{K} \subseteq \mathcal{H}_2$).  Moreover, $\mathcal{H}_1:=\bigoplus_{n \in \mathbb N_0}\mathfrak{L}_{n}(\mathcal{W}),$ and hence	
	$$\mathcal{H}_2:=\left(\bigoplus_{n \in \mathbb N_0}\mathfrak{L}_{n}(\mathcal{W})\right)^{\bot}=\bigcap_{n\in  \mathbb N_0}\mathfrak{L}_{n}(\mathcal{H}),$$ where $\mathcal{W}:=\mbox{ran}(I_{\mathcal{H}}-\widetilde{T}\widetilde{T}^*)$ and $T_0:=\sigma.$
\end{theorem}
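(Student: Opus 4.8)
The plan is to imitate the classical Wold decomposition of a single isometry, with $\wt{T}$ playing the role of the isometry and the tensor powers $E^{\ot n}$ supplying the shift structure. Since $\wt{T}$ is an isometry, $\wt{T}\wt{T}^*$ is the projection onto $\mbox{ran}\,\wt{T}=\mathfrak L_1(\mathcal H)$, so the wandering subspace is $\mathcal W=\mbox{ran}(I_{\mathcal H}-\wt{T}\wt{T}^*)=\ker\wt{T}^*=\mathcal H\ominus\mathfrak L_1(\mathcal H)$. Taking adjoints in the relation $\wt{T}(\phi(a)\ot I)=\sigma(a)\wt{T}$ of Lemma \ref{MSL} gives $(\phi(a)\ot I)\wt{T}^*=\wt{T}^*\sigma(a)$, whence $\mathcal W=\ker\wt{T}^*$ and $\mathfrak L_1(\mathcal H)=\mbox{ran}\,\wt{T}$ are $\sigma(\mathcal M)$-invariant; in particular $\mathcal W$ is wandering and $E^{\ot n}\ot_\sigma\mathcal W$ is meaningful. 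I also record that each $I_{E^{\ot k}}\ot\wt{T}$ is an isometry (as $\wt{T}^*\wt{T}=I$), so by \eqref{eqnn} every $\wT_n$ is an isometry with closed range $\mathfrak L_n(\mathcal H)$, and $\wT_{n+1}=\wT_n(I_{E^{\ot n}}\ot\wt{T})$.

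The crucial step, which I expect to be the main obstacle, is the one-step splitting
\begin{equation*}
\mathfrak L_n(\mathcal H)=\mathfrak L_n(\mathcal W)\oplus\mathfrak L_{n+1}(\mathcal H)\qquad(n\ge 0).
\end{equation*}
I would begin from the orthogonal decomposition $\mathcal H=\mathcal W\oplus\mathfrak L_1(\mathcal H)$ into $\sigma(\mathcal M)$-invariant submodules and tensor it up to $E^{\ot n}\ot_\sigma\mathcal H=(E^{\ot n}\ot_\sigma\mathcal W)\oplus(E^{\ot n}\ot_\sigma\mathfrak L_1(\mathcal H))$; the two summands are orthogonal because $\langle\eta\ot w,\eta'\ot h\rangle=\langle w,\sigma(\langle\eta,\eta'\rangle)h\rangle=0$ for $w\in\mathcal W$ and $h\in\mathfrak L_1(\mathcal H)$, using that $\mathfrak L_1(\mathcal H)$ is $\sigma(\mathcal M)$-invariant. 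Applying the isometry $\wT_n$ yields the orthogonal decomposition $\mathfrak L_n(\mathcal H)=\wT_n(E^{\ot n}\ot_\sigma\mathcal W)\oplus\wT_n(E^{\ot n}\ot_\sigma\mathfrak L_1(\mathcal H))$, and I would identify the images as $\wT_n(E^{\ot n}\ot_\sigma\mathcal W)=\mathfrak L_n(\mathcal W)$ and $\wT_n(E^{\ot n}\ot_\sigma\mathfrak L_1(\mathcal H))=\wT_n(\mbox{ran}(I_{E^{\ot n}}\ot\wt{T}))=\mbox{ran}\,\wT_{n+1}=\mathfrak L_{n+1}(\mathcal H)$ via the factorization above. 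Iterating from $n=0$ and letting $n\to\infty$ gives $\mathcal H=\bigoplus_{n\ge 0}\mathfrak L_n(\mathcal W)\oplus\bigcap_n\mathfrak L_n(\mathcal H)$; I set $\mathcal H_1:=\bigoplus_{n\ge 0}\mathfrak L_n(\mathcal W)$ and $\mathcal H_2:=\bigcap_n\mathfrak L_n(\mathcal H)$.

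The remaining items are then comparatively routine. Each $\mathfrak L_n(\mathcal W)$ is $\sigma(\mathcal M)$-invariant and $T(\xi)\mathfrak L_n(\mathcal W)\subseteq\mathfrak L_{n+1}(\mathcal W)$, so $\mathcal H_1$ is invariant under $\sigma(\mathcal M)$ and every $T(\xi)$; likewise $T(\xi)\mathfrak L_n(\mathcal H)\subseteq\mathfrak L_{n+1}(\mathcal H)$ gives $T(\xi)\mathcal H_2\subseteq\mathcal H_2$, so $\mathcal H_2=\mathcal H_1^{\bot}$ is invariant under $\sigma(\mathcal M)$ and every $T(\xi)$ as well; by the definition of reducing, both $\mathcal H_1$ and $\mathcal H_2$ reduce $(\sigma,T)$. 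To exhibit $(\sigma_1,T_1)$ as induced I would set $\pi:=\sigma|_{\mathcal W}$ and define $U\colon\mathcal F(E)\ot_\pi\mathcal W\to\mathcal H_1$ by $U(\eta\ot w):=\wT_n(\eta\ot w)$ for $\eta\in E^{\ot n}$, $w\in\mathcal W$; the ranges $\mathfrak L_n(\mathcal W)$ are mutually orthogonal and fill $\mathcal H_1$, so $U$ is unitary, while $\wT_n(\phi^n(a)\ot I)=\sigma(a)\wT_n$ and $\wT_{n+1}((\xi\ot\eta)\ot h)=T(\xi)\wT_n(\eta\ot h)$ give $U\rho(a)=\sigma_1(a)U$ and $US(\xi)=T_1(\xi)U$. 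For the fully coisometric claim, note $\wt{T_2}=\wt{T}|_{E\ot_\sigma\mathcal H_2}$; given $h\in\mathcal H_2\subseteq\mathfrak L_1(\mathcal H)=\mbox{ran}\,\wt{T}$ write $h=\wt{T}g$ and split $g=g_1+g_2$ along $(E\ot_\sigma\mathcal H_1)\oplus(E\ot_\sigma\mathcal H_2)$; since $\wt{T}g_1\in\mathcal H_1$ and $\wt{T}g_2,h\in\mathcal H_2$, uniqueness forces $h=\wt{T}g_2\in\mbox{ran}\,\wt{T_2}$, so the isometry $\wt{T_2}$ is onto and $\wt{T_2}\wt{T_2}^*=I_{\mathcal H_2}$.

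For uniqueness, let $\mathcal K$ reduce $(\sigma,T)$. If $(\sigma,T)|_{\mathcal K}$ is fully coisometric then $\mathfrak L_1(\mathcal K)=\mathcal K$, hence $\mathfrak L_n(\mathcal K)=\mathcal K$ for every $n$ and $\mathcal K\subseteq\bigcap_n\mathfrak L_n(\mathcal H)=\mathcal H_2$. If instead $(\sigma,T)|_{\mathcal K}$ is induced then, by the generating wandering subspace property of an induced representation, $\mathcal K=\bigoplus_{n\ge 0}\mathfrak L_n(\mathcal W_{\mathcal K})$ with $\mathcal W_{\mathcal K}=\ker\wt{T_{\mathcal K}}^*$; for $w\in\mathcal W_{\mathcal K}$ the orthogonality $w\perp\mathfrak L_1(\mathcal K)$ together with $T(\zeta)^*w\in\mathcal K$ (as $\mathcal K$ reduces) forces $T(\zeta)^*w=0$ for all $\zeta\in E$, so $\wt{T}^*w=0$ and thus $\mathcal W_{\mathcal K}\subseteq\mathcal W$; therefore $\mathcal K\subseteq\bigoplus_{n\ge 0}\mathfrak L_n(\mathcal W)=\mathcal H_1$. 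This establishes the asserted uniqueness and completes the proof.
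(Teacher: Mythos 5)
The paper states this theorem without proof, citing Muhly--Solel \cite{MS99}, so there is no in-text argument to compare against; your proposal is a correct reconstruction of the standard argument from that reference. The one-step orthogonal splitting $\mathfrak{L}_n(\mathcal{H})=\mathfrak{L}_n(\mathcal{W})\oplus\mathfrak{L}_{n+1}(\mathcal{H})$, obtained by tensoring $\mathcal{H}=\ker\wt{T}^*\oplus\mbox{ran}\,\wt{T}$ with $E^{\otimes n}$ and pushing forward by the isometry $\wT_n$, is exactly the mechanism of the cited Wold decomposition, and the remaining identifications (the unitary onto $\mathcal{F}(E)\otimes_\pi\mathcal{W}$ exhibiting $\mathcal{H}_1$ as induced, the surjectivity of $\wt{T}$ on $E\otimes\mathcal{H}_2$, and both halves of the uniqueness statement) are handled correctly.
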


\section{Halmos-Richter-type Wandering subspace theorem for covariant representations of $C^*$-correspondences}\label{Sec2}

\begin{definition}	
	 Let $(\sigma, T)$ be a completely bounded, covariant representation of $E$ on a Hilbert space $\mathcal{H}.$ If the inequality
\begin{align}\label{W1}
\| \wt{T}_2(\eta \otimes h)\|^2+\|\eta \otimes h\|^2 \leq 2 \|(I_E \otimes  \wt{T})(\eta \otimes h)\|^2
\end{align}
 is satisfied for each $\eta \in E^{\otimes 2}, h \in \mathcal{H};$ then we say that the covariant representation $(\sigma, T)$ is \it{concave}.
\end{definition}

\begin{lemma}\label{L1}
Let $(\sigma, T)$ be a concave completely bounded, covariant representation of $E$ on a Hilbert space $\mathcal{H}.$ Then
\begin{enumerate}
\item[(1)] $\|\wt{T}(\xi \otimes h)\| \geq \|\xi \otimes h\|, \: ~\mbox{for each}~\xi \in E, h \in \mathcal{H} ;$
\item[(2)] $\|\wt{T}_n(\zeta \otimes h)\|^2 \leq \| \zeta \otimes h\|^2 + n(\|(I_{E^{\otimes (n-1)}} \otimes \wt{T})(\zeta \otimes h)\|^2-\|\zeta \otimes h\|^2), ~\mbox{for each}~$ $\zeta \in E^{\otimes n}, n \in \mathbb{N}.$
\end{enumerate}
\end{lemma}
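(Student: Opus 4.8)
The plan is to recast concavity as an operator inequality that propagates to every level of the tensor construction, and then to read off both parts from the ``discrete concavity'' of a scalar sequence. Throughout write $R_k:=I_{E^{\ot(k-1)}}\ot\wT$, so that $R_1=\wT$ and, by \eqref{eqnn}, $\wT_n=R_1R_2\cdots R_n$ as a map $E^{\ot n}\ot_\sigma\mathcal H\to\mathcal H$; each $R_k$ lowers the tensor degree by one. In this notation the hypothesis \eqref{W1} is exactly the operator inequality $R_2^*R_1^*R_1R_2+I\le 2R_2^*R_2$ on $E^{\ot 2}\ot_\sigma\mathcal H$. The first step is to amplify it: the ampliation $X\mapsto I_{E^{\ot(m-1)}}\ot X$ is a $*$-homomorphism, hence positive, and under the identification $E^{\ot(m-1)}\ot(E^{\ot j}\ot_\sigma\mathcal H)=E^{\ot(m+j-1)}\ot_\sigma\mathcal H$ one has $I_{E^{\ot(m-1)}}\ot R_j=R_{m+j-1}$. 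Applying it to \eqref{W1} yields, for every $m\ge 1$ and every $y\in E^{\ot(m+1)}\ot_\sigma\mathcal H$, the level-$m$ concavity
\[
\|R_mR_{m+1}y\|^2+\|y\|^2\le 2\|R_{m+1}y\|^2 ,
\]
uniformly in $m$. This amplified family is the engine for both parts.

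For (2), fix $n$, put $x:=\zeta\ot h\in E^{\ot n}\ot_\sigma\mathcal H$, and form the descending chain $x^{(k)}:=R_{k+1}\cdots R_n\,x\in E^{\ot k}\ot_\sigma\mathcal H$ for $0\le k\le n$, so that $x^{(n)}=x$, $x^{(n-1)}=(I_{E^{\ot(n-1)}}\ot\wT)x$ and $x^{(0)}=\wT_nx$. Setting $\beta_k:=\|x^{(k)}\|^2$ and applying the level-$k$ concavity to $y=x^{(k+1)}$ gives $\beta_{k-1}+\beta_{k+1}\le 2\beta_k$ for $1\le k\le n-1$; equivalently the differences $\delta_k:=\beta_k-\beta_{k-1}$ satisfy $\delta_1\ge\cdots\ge\delta_n$. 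Since $\beta_0=\beta_n-\sum_{k=1}^n\delta_k$ and every $\delta_k\ge\delta_n$, we get $\beta_0\le\beta_n-n\delta_n=\beta_n+n(\beta_{n-1}-\beta_n)$, which is precisely the asserted bound $\|\wT_nx\|^2\le\|x\|^2+n(\|(I_{E^{\ot(n-1)}}\ot\wT)x\|^2-\|x\|^2)$.

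For (1), the amplified family first gives a uniform lower bound at the higher levels, purely from positivity. Dropping the positive term $\|R_mR_{m+1}y\|^2$ shows $R_k^*R_k\ge\tfrac12 I$ for all $k\ge 2$; feeding a bound $R_k^*R_k\ge c^2I$ (uniform in $k$) back into the level-$m$ inequality improves the constant to $1/(2-c^2)$, and the iteration $c^2\mapsto 1/(2-c^2)$ started at $\tfrac12$ produces $c_n^2=n/(n+1)\uparrow 1$. (Equivalently, one may combine (2) with $\|\wT_nx\|^2\ge 0$ to obtain $\|(I_{E^{\ot(n-1)}}\ot\wT)x\|^2\ge\tfrac{n-1}{n}\|x\|^2$ directly.) Either way $R_k^*R_k\ge I$ for all $k\ge 2$.

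It remains to descend this to $R_1=\wT$ on $E\ot_\sigma\mathcal H$, and this is the step I expect to be the main obstacle. The mechanism I would use is that, by Lemma \ref{MSL}, $\wT(\phi(a)\ot I)=\sigma(a)\wT$, whence $(\phi(a)\ot I)\wT^*=\wT^*\sigma(a)$ and therefore $G:=\wT^*\wT-I$ commutes with every $\phi(a)\ot I$. The case $k=2$ reads $I_E\ot G\ge 0$; evaluating on $\eta\ot v$ and using the commutation to split $\langle\eta,\eta\rangle=\langle\eta,\eta\rangle^{1/2}\langle\eta,\eta\rangle^{1/2}$ gives $\langle(\phi(\langle\eta,\eta\rangle^{1/2})\ot I)v,\;G\,(\phi(\langle\eta,\eta\rangle^{1/2})\ot I)v\rangle\ge 0$. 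Since the left action $a\mapsto\phi(a)\ot I$ on $E\ot_\sigma\mathcal H$ is nondegenerate, vectors of this form are dense, forcing $G\ge 0$, which is (1). The delicate input is exactly this nondegeneracy/essentiality: it is the only place an assumption beyond bare positivity enters, and it is what excludes the degenerate correspondences (for instance those with $E^{\ot 2}=0$) on which the stated inequality would otherwise fail.
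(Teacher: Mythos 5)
Your argument is essentially the paper's, modulo packaging, and in one respect it is more careful. For (2), the paper proves the operator inequality $\wT_n^*\wT_n-I\le n\bigl((I_{E^{\ot (n-1)}}\ot\wT^*\wT)-I\bigr)$ by induction, sandwiching the induction hypothesis between $I_{E^{\ot n}}\ot\wT^*$ and $I_{E^{\ot n}}\ot\wT$ and then invoking the ampliated form of \eqref{W1}; your ``level-$m$ concavity'' together with the monotonicity of the differences $\delta_k$ of the scalar sequence $\beta_k=\|x^{(k)}\|^2$ is the same computation read off pointwise, and $\beta_0\le\beta_n+n(\beta_{n-1}-\beta_n)$ is the induction unwound --- a cleaner presentation, but not a different proof. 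For (1), your parenthetical route (combine (2) with $\|\wT_nx\|^2\ge0$ to get $R_n^*R_n\ge\tfrac{n-1}{n}I$ and let $n\to\infty$) is precisely what the paper does; the paper then passes from this level-$n$ bound down to $\wT$ itself with the phrase ``using the properties of the creation operators it is evident,'' which is exactly the descent step you isolate and spell out. Your observation that this descent is where a nondegeneracy hypothesis enters is correct and worth making: what is actually needed is that $\phi$ restricted to the ideal $\overline{\langle E,E\rangle}$ acts nondegenerately on $E$, so that the vectors $(\phi(\langle\eta,\eta\rangle^{1/2})\ot I)v$ are total in $E\ot_\sigma\mathcal H$; the paper's blanket essentiality assumption $\overline{\phi(\mathcal M)E}=E$ does not by itself deliver this, and for degenerate correspondences (e.g.\ $E^{\ot 2}=0$) the statement does fail, as you note. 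One small repair to the logical order of your write-up: the bootstrap $c^2\mapsto1/(2-c^2)$ improves the constant only at strictly higher tensor levels (after $j$ iterations the bound is known only for $R_k$ with $k\ge j+2$), so the claim ``$R_k^*R_k\ge I$ for all $k\ge2$'' already presupposes the descent mechanism $R_{k+1}^*R_{k+1}=I_E\ot R_k^*R_k$ together with the positivity-descends argument that you only present afterwards; the parenthetical route via (2) avoids this circularity and should be promoted to the main line.
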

\begin{proof}
First we shall prove the conclusion (2) by Mathematical induction. Note that Inequality (\ref{W1})  is equivalent to the operator inequality
\begin{align}\label{W2}
\wt{T}^*_2\wt{T}_2-I_{E^{\otimes 2} \otimes \mathcal{H}} \leq  2((I_E \otimes \wt{T}^*\wt{T})-I_{E^{\otimes 2} \otimes \mathcal{H}}).
\end{align}
Assume  for fixed $n\in \mathbb N$
\begin{align}\label{W3}
\wt{T}^*_n\wt{T}_n-I_{E^{\otimes n} \otimes \mathcal{H}} \leq n((I_{E^{\otimes n-1}} \otimes \wt{T}^*\wt{T})-I_{E^{\otimes n} \otimes \mathcal{H}}).
\end{align}
Then the inequality
\begin{align*}
\wt{T}^*_{n+1}\wt{T}_{n+1}-I_{E^{\otimes n+1} \otimes \mathcal{H}} \leq (n+1)((I_{E^{\otimes n}} \otimes \wt{T}^*\wt{T})-I_{E^{\otimes n+1} \otimes \mathcal{H}}),
\end{align*}
follows from
\begin{align*}
&(I_{E^{\otimes n}} \otimes \wt{T}^*)\wt{T}^*_n\wt{T}_n(I_{E^{\otimes n}} \otimes \wt{T})-(I_{E^{\otimes n}} \otimes \wt{T}^*\wt{T})\\
& \leq  n((I_{E^{\otimes n}} \otimes \wt{T}^*)(I_{E^{\otimes n-1}} \otimes \wt{T}^*\wt{T})(I_{E^{\otimes n}} \otimes \wt{T})-(I_{E^{\otimes n}} \otimes \wt{T}^*\wt{T}))\\
&=n((I_{E^{\otimes n-1}} \otimes \wt{T}_2^*\wt{T}_2)-(I_{E^{\otimes n}} \otimes \wt{T}^*\wt{T}))\\
& \leq n(2(I_{E^{\otimes n}} \otimes \wt{T}^*\wt{T})-I_{E^{\otimes n+1 }\otimes \mathcal{H}}-(I_{E^{\otimes n}} \otimes \wt{T}^*\wt{T})),
\end{align*}
where the last inequality follows from Inequality (\ref{W2}). Hence the conclusion (2) holds.
It follows from (2) that
\begin{align*}
\|\zeta \otimes h\|^2+n(\|(I_{E^{\otimes n-1}} \otimes \wt{T})(\zeta \otimes h)\|^2-\|\zeta \otimes h\|^2) \geq 0, 
\end{align*}
for each $\zeta \in E^{\otimes n}, h \in \mathcal{H}, n \in \mathbb{N}.$ Then $$\|(I_{E^{\otimes n-1}} \otimes \wt{T})(\zeta \otimes h)\|^2\geq \frac{n-1}{n} \|\zeta \otimes h\|^2. $$ Using the properties of the creation operators it is evident that 
 $$\|\wt{T}(\xi \otimes h)\| \geq \frac{n-1}{n}\|\xi \otimes h\|~\mbox{for all}~n \in \mathbb{N}.$$ Letting $n \rightarrow \infty$ we obtain  $\|\xi \otimes h\|\leq \|\wt{T}(\xi \otimes h)\|, \: \xi \in E.$
\end{proof}
\begin{definition}
Let $(\sigma, T)$ be a completely bounded, covariant representation of $E$ on a Hilbert space $\mathcal{H}.$ We say that $(\sigma,T)$ is {\it analytic} if $$\bigcap_{n \geq 1}\wt{T}_n(E^{\otimes n} \otimes \mathcal{H})=\{0\}.$$
\end{definition}
  Let $\mathcal{K}$  be a  $(\sigma, T)$-invariant subspace  of $\mathcal{H}$. Then $\mathcal{K} \ominus \wt{T}(E \otimes \mathcal{K})=\mbox{ker} \wt{T}^*|_{\mathcal{K}}$ is a wandering subspace for $(\sigma, T).$ Conversely, let $\mathcal{W}$ be a wandering subspace for $(\sigma, T)$. Then  the subspace $\mathcal{K} :=\bigvee_{n\geq 0}\mathfrak{L}_n({\mathcal{W}})$ is $(\sigma, T)$-invariant  and  in this case $\mathcal{W}=\mathcal{K} \ominus \wt{T}(E \otimes \mathcal{K}).$ Indeed,
  \begin{align*}\mathcal{K} \ominus \wt{T}(E \otimes \mathcal{K})&=\bigvee_{n\geq 0}\mathfrak{L}_n({\mathcal{W}}) \ominus \wt{T}(E \otimes \bigvee_{n\geq 0}\mathfrak{L}_n({\mathcal{W}}))\\&=\bigvee_{n\geq 0}\mathfrak{L}_n({\mathcal{W}}) \ominus \bigvee_{n\geq 1}\mathfrak{L}_n({\mathcal{W}})=\mathcal{W}.
\end{align*}
 % the inclusion $\mathcal{W}_1 \subseteq \mathcal{K} \ominus \wt{T}(E \otimes \mathcal{K})$ is trivial, and to prove the reverse inclusion let $w \in \mathcal{K} \ominus \wt{T}(E \otimes \mathcal{K}).$ Then $$w=\lim_{n \rightarrow \infty}\sum_{m}\wt{T}_n(\eta_{mn} \otimes h_{mn}),$$  for some $\eta_{mn} \in E^{\otimes n}, h_{mn} \in \mathcal{W}_1.$
 %Let $P$  be a orthogonal projection  on $\mathcal{K} \ominus \wt{T}(E \otimes \mathcal{K})$,  then $w=Pw={\wt{T}_0(\eta_{0n} \otimes h_{0n}} )\in \mathcal{W}_1.$
  %Hence the reverse inclusion follows and it implies that the wandering subspace  $\mathcal{W}_1$ is uniquely determined by the invariant subspace $\mathcal{K}$.
Hence the wandering subspace  $\mathcal{W}$ is uniquely determined by the invariant subspace $\mathcal{K}$. This combined with the following theorem, which is a generalization of Richter's wandering subspace theorem for analytic operators  \cite[Theorem 1]{R88}, gives us one-to-one correspondence between the set of all wandering subspaces of $\mathcal{H}$ to the set of all $(\sigma, T)$-invariant subspaces of $\mathcal{H}.$

\begin{theorem}\label{T1}
Let $(\sigma, T)$ be an analytic, concave, completely bounded, covariant representation of $E$ on a Hilbert space $\mathcal{H}.$ If $\mathcal{K}$ is a $(\sigma, T)$-invariant subspace of $\mathcal{H}$, then there exists a wandering subspace $\mathcal{W}$ for $(\sigma, T)$ such that
\begin{align*}
\mathcal{K}= \bigvee_{n \in \mathbb{N}_0}\mathfrak{L}_n(\mathcal{W}).
\end{align*}
In particular,  $(\sigma, T)$ has {\rm generating wandering subspace property}, that is, $$\mathcal{H}=\bigvee_{n \in \mathbb{N}_0}\mathfrak{L}_n(\mathcal{W}),~\mbox{and}~\mathcal{W}=\mbox{ker}\wT^*.$$
\end{theorem}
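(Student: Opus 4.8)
The plan is to prove the stronger statement $\mathcal{K}=\bigvee_{n\in\bn_0}\mathfrak{L}_n(\mathcal{W})$ for the concrete wandering subspace $\mathcal{W}=\mathcal{K}\ominus\wT(E\ot\mathcal{K})=\mbox{ker}\,\wT^*|_{\mathcal{K}}$, and then specialize to $\mathcal{K}=\mathcal{H}$ (which is trivially $(\sigma,T)$-invariant) to read off the generating wandering subspace property with $\mathcal{W}=\mbox{ker}\,\wT^*$. Throughout I would use that concavity forces $\wT$ to be bounded below: Lemma~\ref{L1}(1), in its operator form $\wT^*\wT\ge I$, makes $\wT$ injective with closed range, so that each $I_{E^{\ot k}}\ot\wT$ and each $\wT_n$ is injective with closed range and satisfies $\|\wT_n\zeta\|\ge\|\zeta\|$.

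First I would record the one-step orthogonal splitting. Since $\mathcal{W}$ and $\mathcal{K}_1:=\wT(E\ot\mathcal{K})$ are orthogonal, $\sigma(\mathcal M)$-reducing subspaces with $\mathcal{K}=\mathcal{W}\oplus\mathcal{K}_1$, their projections lie in $\sigma(\mathcal M)'$ and may be tensored on the left, giving for every $n$ the orthogonal decomposition $E^{\ot n}\ot\mathcal{K}=(E^{\ot n}\ot\mathcal{W})\oplus(I_{E^{\ot n}}\ot\wT)(E^{\ot n+1}\ot\mathcal{K})$. Starting from $\zeta_0:=x\in\mathcal{K}$, I would iterate this: writing $\zeta_n=\zeta_n'+(I_{E^{\ot n}}\ot\wT)\zeta_{n+1}$ with $\zeta_n'\in E^{\ot n}\ot\mathcal{W}$ and $\zeta_{n+1}\in E^{\ot n+1}\ot\mathcal{K}$, and using $\wT_n(I_{E^{\ot n}}\ot\wT)=\wT_{n+1}$ from (\ref{eqnn}), one obtains $x=\sum_{k=0}^{n-1}m_k+r_n$, where $m_k:=\wT_k\zeta_k'\in\mathfrak{L}_k(\mathcal{W})$ and $r_n:=\wT_n\zeta_n\in\wT_n(E^{\ot n}\ot\mathcal{K})$. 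Hence $s_n:=\sum_{k<n}m_k\in\mathcal{M}:=\bigvee_{k}\mathfrak{L}_k(\mathcal{W})\subseteq\mathcal{K}$.

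The heart of the argument is the norm bookkeeping, which is needed precisely because $\wT$ is not isometric, so the expansion above is not orthogonal. Setting $a_n:=\|\zeta_n\|^2$, the orthogonality of the $n$-th splitting together with $\|(I_{E^{\ot n}}\ot\wT)\zeta_{n+1}\|\ge\|\zeta_{n+1}\|$ gives $a_n-a_{n+1}\ge\|\zeta_n'\|^2\ge0$; thus $a_n\downarrow L\ge0$ and $b_n:=a_{n-1}-a_n$ is summable with $\sum_n b_n\le a_0=\|x\|^2$. Feeding the concavity estimate Lemma~\ref{L1}(2) with the identity $\|(I_{E^{\ot n-1}}\ot\wT)\zeta_n\|^2=a_{n-1}-\|\zeta_{n-1}'\|^2$ yields $\|r_n\|^2\le a_n+n(a_{n-1}-a_n)=a_n+nb_n$. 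Since $b_n\ge0$ is summable, $\liminf_n nb_n=0$, whence $\liminf_n\|r_n\|^2\le L$; on the other hand $\|r_n\|^2=\|\wT_n\zeta_n\|^2\ge a_n\ge L$. I expect this pair of one-sided estimates to be the crux: even without orthogonality, they pin $\liminf_n\|r_n\|^2$ down to $L$.

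Finally I would show $\mathcal{K}\ominus\mathcal{M}=\{0\}$. Take $x\perp\mathcal{M}$. Then $s_n\in\mathcal{M}$ gives $\langle x,r_n\rangle=\langle x,x-s_n\rangle=\|x\|^2$, so $\|x\|\le\|r_n\|$ for all $n$ and hence $\|x\|^2\le\liminf_n\|r_n\|^2\le L\le a_0=\|x\|^2$. Therefore $L=\|x\|^2$ and the decreasing sequence $a_n$ is constant equal to $\|x\|^2$, forcing $b_n=0$, then $\zeta_n'=0$, then $m_n=0$ and $s_n=0$; consequently $x=r_n=\wT_n\zeta_n\in\wT_n(E^{\ot n}\ot\mathcal{H})$ for every $n\ge1$. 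Analyticity now gives $x\in\bigcap_{n\ge1}\wT_n(E^{\ot n}\ot\mathcal{H})=\{0\}$, so $x=0$ and $\mathcal{K}=\mathcal{M}$. The two points to watch are the justification that concavity yields the genuine operator bound $\wT^*\wT\ge I$ (so that all the ranges are closed and the splittings are honest orthogonal decompositions), and the clean extraction of $\liminf_n nb_n=0$ from summability; beyond these, the proof is just the telescoping identity combined with analyticity.
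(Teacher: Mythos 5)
Your argument is correct, and its core machinery coincides with the paper's up to notation: your $\zeta_n$ is exactly $L^nh$ for the left inverse $L=(\wt{V}^*\wt{V})^{-1}\wt{V}^*$ that the paper builds from concavity, your $\zeta_n'$ is $(I_{E^{\otimes n}}\otimes P)L^nh$, your telescoping identity $x=\sum_{k<n}m_k+r_n$ is the paper's $(I_{\mathcal K}-\wt{V}_nL^n)h=\sum_{j}\wt{V}_j(I_{E^{\otimes j}}\otimes P)L^jh$, your summability of $b_n$ is the paper's norm identity \eqref{claim}, and your extraction of $\liminf_n nb_n=0$ is the paper's harmonic-series divergence trick leading to \eqref{claim2}. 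Where you genuinely diverge is the endgame. The paper passes to a weakly convergent subsequence of $r_{n_j}=\wt{V}_{n_j}L^{n_j}h$, uses that the ranges $\wt{V}_N(E^{\otimes N}\otimes\mathcal{K})$ are weakly closed, and invokes analyticity to conclude the weak limit is $0$, so that the partial sums converge weakly to $h$. You instead test against $x\perp\bigvee_k\mathfrak{L}_k(\mathcal{W})$: Cauchy--Schwarz gives $\|x\|^2\le\liminf_n\|r_n\|^2\le L\le a_0=\|x\|^2$, which forces the monotone sequence $a_n$ to be constant, hence all $\zeta_n'=0$ and $x=r_n\in\wT_n(E^{\otimes n}\otimes\mathcal{H})$ for every $n$, so $x=0$ by analyticity. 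Your version is purely metric and avoids weak compactness and weak closedness of ranges altogether; the paper's version additionally produces the convergence of the approximants $s_{n_j}$ to $h$ (weak, and then norm via Remark \ref{rm1}), which is reused later in Section \ref{Sec3}. Both versions lean on the same two facts you correctly flag: the operator inequality $\wt{V}^*\wt{V}\ge I_{E\otimes\mathcal K}$ (which the paper likewise asserts from Lemma \ref{L1}(1)) to guarantee closed ranges and honest orthogonal splittings, and the divergence of $\sum 1/n$.
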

\begin{proof}
Let $\mathcal{W}=\mathcal{K} \ominus \wt{T}(E \otimes \mathcal{K})(=\mathcal{K} \cap \mbox{ker}\wt{T}^*) $. Then clearly $\mathcal{W}$ is a wandering subspace for $(\sigma, T)$, and $\mathcal{W} \subseteq \mathcal{K}.$ Since $\mathcal{K}$ is $(\sigma, T)$-invariant,
\begin{align*}
\bigvee_{n \in \mathbb{N}_0}\mathfrak{L}_n(\mathcal{W}) \subseteq \mathcal{K}.
\end{align*}

Now we shall prove the reverse inclusion. Define $(\rho, V):=(\sigma, T)|_{\mathcal{K}}$. Then the covariant representation $(\rho, V)$ is analytic and satisfy Inequality  (\ref{W1}).
Apply Lemma \ref{L1} (1)  to $(\rho, V),$ then $\wt{V}^*\wt{V}-I_{E \otimes \mathcal{K}}$ is a positive operator. Let $\mathcal{D}_V:=(\wt{V}^*\wt{V}-I_{E \otimes \mathcal{K}})^{1/2}.$  Then
\begin{align*}
\|\mathcal{D}_V(\xi \otimes h)\|^2=\|\wt{V}(\xi \otimes h)\|^2-\|\xi \otimes h\|^2,
\end{align*}
where $\xi \in E, h \in \mathcal{K}.$ Using Lemma \ref{L1} (2) we get
\begin{align}\label{Eq4}
\nonumber &\|\wt{V}_n(\zeta \otimes h)\|^2-\|\zeta \otimes h\|^2 \\ \nonumber\leq & n\left(\langle(I_{E^{\otimes n-1}} \otimes \wt{V}^*\wt{V})(\zeta \otimes h)-\zeta \otimes h,\zeta \otimes h\rangle\right) \\
=& n\|(I_{E^{\otimes n-1}} \otimes \mathcal{D}_V)(\zeta \otimes h)\|^2, \:\: \zeta \in E^{\otimes n}, h \in \mathcal{K.}
\end{align}
Since $\wt{V}$ is bounded below, it is easy to check that $\wt{V}^*\wt{V}$ is invertible and hence $L:=(\wt{V}^*\wt{V})^{-1}\wt{V}^*$ is a left inverse of $\wt{V}.$  Let  $Q:=\wt{V}L$ and $P:=I_{\mathcal{K}}-Q$. Then $P$ and $Q$ are orthogonal projections with $\mbox{ran} P= \mbox{ker} Q=\mbox{ker} \wt{V}^*=\mathcal{W}.$
For each $ n \in \mathbb{N}$, we define $L^n: \mathcal{K} \rightarrow E^{\otimes n} \otimes \mathcal{K}$  by
\begin{align}
L^n:=(I_{E^{\otimes n-1}} \otimes L)(I_{E^{\otimes n-2}} \otimes L)  \cdots (I_{E} \otimes L)L.
\end{align}
Fix $h \in \mathcal{K}$, we have
\begin{align*}
(I_{\mathcal{K}}-\wt{V}_nL^n)h &=\sum_{j=0}^{n-1}(\wt{V}_jL^j-\wt{V}_{j+1}L^{j+1})h\\&=\sum_{j=0}^{n-1}\wt{V}_j(I_{E^{\otimes j} \otimes \mathcal{K}}-(I_{E^{\otimes j}} \otimes \wt{V}L))L^jh\\
&=\sum_{j=0}^{n-1}\wt{V}_j(I_{E^{\otimes j}} \otimes P)L^{j}h \in \bigvee_{n \in \mathbb{N}_0}\mathfrak{L}_n(\mathcal{W}).
\end{align*}
Since $\bigvee_{n \in \mathbb{N}_0}\mathfrak{L}_n(\mathcal{W})$  is weakly closed, it is enough to prove that the sequence $\{(I_{\mathcal{K}}-\wt{V}_nL^n)h\}$ has a weakly convergent subsequence and it converges  to $h$(weakly).
 First using Mathematical induction we shall show that  for each  $n \in \mathbb{N}$,
\begin{align}\label{claim}
\|h\|^2=\sum_{j=0}^{n-1}\|(I_{E^{\otimes j}} \otimes P)L^jh\|^2+\|L^nh\|^2+\sum_{j=1}^{n}\|(I_{E^{\otimes j-1}} \otimes \mathcal{D}_V)L^jh\|.
\end{align}
Note that the $n=1$ case follows from
\begin{align*}
\|h\|^2&=\|Ph\|^2+\|Qh\|^2=\|Ph\|^2+\|Lh\|^2+\|\wt{V}Lh\|^2-\|Lh\|^2\\
&=\|Ph\|^2+\|Lh\|^2+\|\mathcal{D}_VLh\|^2.
\end{align*}
Assume $n \geq 1,$ it follows that
\begin{align}\label{claim1}
 \nonumber &\|L^nh\|^2\\ \nonumber =&\|(I_{E^{\otimes n}} \otimes P)L^nh\|^2+\|L^{n+1}h\|^2+\|(I_{E^{\otimes n}}\otimes \wt{V}L)L^nh\|^2-\|L^{n+1}h\|^2 \\
=&\|(I_{E^{\otimes n}} \otimes P)L^nh\|^2+\|L^{n+1}h\|^2+\|(I_{E^{\otimes n}} \otimes \mathcal{D}_V)L^{n+1}h\|^2.
\end{align}
Using  the previous equation and the induction hypothesis for $n \in \mathbb{N},$ we see that
\begin{align*}
\|h\|^2&=\sum_{j=0}^{n-1}\|(I_{E^{\otimes j}} \otimes P)L^jh\|^2+\|L^nh\|^2+\sum_{j=1}^n\|(I_{E^{\otimes j-1}} \otimes \mathcal{D}_V)L^jh\|^2\\
&=\sum_{j=0}^n\|(I_{E^{\otimes j}} \otimes P)L^jh\|^2+\|L^{n+1}h\|^2+\sum_{j=1}^{n+1}\|(I_{E^{\otimes j-1}} \otimes \mathcal{D}_V)L^jh\|^2.
\end{align*}
Hence the claim has been proved for each $n\in\mathbb N$.

Let $l,m \in \mathbb{N}$ with  $l \leq m,$  then
\begin{align*}
&(inf\{\|\wt{V}_nL^nh\|^2-  \|L^nh\|^2, \: l \leq n \leq m\})\sum_{n=l}^m \frac{1}{n} \\
 \leq &\sum_{n=l}^m \frac{1}{n}(\|\wt{V}_nL^nh\|^2-  \|L^nh\|^2) \\
 \leq &\sum_{n=l}^m \|(I_{E^{\otimes n-1}} \otimes \mathcal{D}_V)L^nh\|^2 \:\:(\mbox{by Inequality \eqref{Eq4})}\\
\leq & \|h\|^2 \:\: (\mbox{by Equation}\eqref{claim}).
\end{align*}
  Using Equation \eqref{claim1} we get that $\{\|L^nh\|^2\}$ is a non-increasing sequence of non-negative numbers.  Hence $\lim_{n \rightarrow \infty}\|L^nh\|^2$ exists and
\begin{align}\label{claim2}
\liminf_{n \rightarrow \infty}\|\wt{V}_nL^nh\|^2=\lim_{n \rightarrow \infty}\|L^nh\|^2.
\end{align}
Then there exists a weakly convergent subsequence of $\{\wt{V}_nL^nh\}$, say $\{\wt{V}_{n_j}L^{n_j}h\}$ which converges to $ w$(weakly) for some $w \in \mathcal{K}$. Let  $N \in \mathbb{N}$  be such that $n_j \geq N$, implies $\wt{V}_{n_j}L^{n_j}h \in \wt{V}_{n_j}(E^{\otimes n_j} \otimes \mathcal{K}) \subseteq \wt{V}_{N}(E^{\otimes N} \otimes \mathcal{K}).$  Since $(\sigma, T)$ is analytic and  $\wt{V}_N(E^{\otimes N} \otimes \mathcal{K})$ is weakly closed (by Lemma \ref{L1}(a)), it follows that $w=0.$ Hence $(I_{\mathcal K}-\wt{V}_{n_j}L^{n_j})h \rightarrow h$(weakly).
\end{proof}

\begin{remark}\label{rm1}
 We shall see that  $(I_{\mathcal K}-\wt{V}_{n_j}L^{n_j})h \rightarrow h$ in norm. This will follow, if $\|L^nh\| \rightarrow 0.$
 Let $h \in \mathcal{K},$  by the previous theorem  there exists a sequence $h_n$ converging to $h$ in the norm  such that $$h_n=\sum_{m=0}^{N_n}\wt{V}_{m}(\eta_{m,n} \otimes h_{m,n}), \:\: \:\mbox{for all}~ \eta_{m,n} \in E^{\otimes m}, h_{m,n} \in \mathcal{W}, N_n \in \mathbb{N}_0.$$
Let $\varepsilon >0.$ Then there exists $N \in \mathbb{N}$ such that $\|h-h_N\| < \varepsilon.$ For each $n > N,$ it follows that
$$\|L^nh\|=\|L^n(h-h_N)\| \leq \|h - h_N\| < \varepsilon. $$
Hence $\|L^nh\| \rightarrow 0.$
\end{remark}
   Using the previous remark and Equation \eqref{claim} we see that for every $h\in\mathcal K$ we have
   \begin{align}
\|h\|^2=\sum_{j=0}^{\infty}\|(I_{E^{\otimes j}} \otimes P)L^jh\|^2+\sum_{j=1}^{\infty}\|(I_{E^{\otimes j-1}} \otimes \mathcal{D}_V)L^jh\|.
\end{align}

\section{Cauchy Dual and Shimorin-type decomposition for covariant representations of $C^*$-correspondences}\label{Sec3}

 The  main theorem of this section, Theorem \ref{MT1}, establishes  two sufficient conditions for the  covariant representation $(\sigma, T)$ for the existence of the following Wold-type decomposition:

\begin{definition}
Let $(\sigma, T)$ be a completely bounded, covariant representation of $E$ on a Hilbert space $\mathcal{H}.$ We say that the covariant representation $(\sigma, T)$ admits {\it Wold-type decomposition} if there exists a wandering subspace $\mathcal{W}$
for $(\sigma, T)$ which decomposes $\mathcal{H}$  into the following direct sum of two $(\sigma, T)$-reducing subspaces  
\begin{align}
\mathcal{H}=\bigvee_{n\geq0}\mathfrak{L}_n(\mathcal{W}) \bigoplus \bigcap_{n \geq 1}\wt{T}_n(E^{\otimes n} \otimes \mathcal{H})
\end{align}
such that the restriction of $(\sigma,T)$ on the subspace $\bigcap_{n \geq 1}\wt{T}_n(E^{\otimes n} \otimes \mathcal{H})$  is isometric as well as fully co-isometric covariant representation. 
\end{definition}
\begin{remark}
Since the wandering subspace  is  unique,  $\mathcal{W}=\mbox{ker}\wt{T}^*$. Throughout this section we use notations $\mathcal{H}_{\infty}$ and $\mathcal{W}$ for

$$\mathcal{H}_{\infty}:=\bigcap_{n \geq 1}\wt{T}_n(E^{\otimes n} \otimes \mathcal{H}); \:\: \mathcal{W}:=\mbox{ker} \wt{T}^*=\mathcal{H}\ominus \wt{T}(E \otimes \mathcal{H}). $$
\end{remark}

Let $(\sigma, T)$ be a completely bounded, covariant representation of $E$ on a Hilbert space $\mathcal{H},$ such that $\wt{T}$ is left invertible.
  Suppose that $\wt{T}$ is invertible, then $\mathcal{H}_{\infty}=\mathcal{H}$ and $ \mathcal{W}=\{0\}.$ If $\wt{T}$ is not invertible, then the subspace  $\mathcal{H}_{\infty}$ is $(\sigma, T)$-invariant and $\mathcal{W}$ is a non-trivial wandering subspace for $(\sigma, T).$
  Since $\wt{T}$ is left invertible, $\wt{T}^*\wt{T}$ is invertible, define the operator $L: \mathcal{H} \rightarrow E \otimes \mathcal{H}$ by $$ L:=(\wt{T}^*\wt{T})^{-1}\wt{T}^*.$$ 
It is easy to see that  $\mbox{ker}L=\mathcal{W}$  and  $P_{\mathcal{W}}=I_{\mathcal{H}}-\wt{T}L,$  where $P_{\mathcal{W}}$  denote the orthogonal projection on $\mathcal{W}.$  We denote this orthogonal projecton on $\mathcal{W}$ by $P$(instead of  $P_{\mathcal{W}}).$
   Let $ n \in \mathbb{N}$, define $L^n: \mathcal{H} \rightarrow E^{\otimes n} \otimes \mathcal{H}$  by
\begin{align}
L^n:=(I_{E^{\otimes n-1}} \otimes L)(I_{E^{\otimes n-2}} \otimes L)  \cdots (I_{E} \otimes L)L.
\end{align}
Note that 
 \begin{align}
 (I_{\mathcal{H}}-\wt{T}_nL^n)h=\sum_{j=0}^{n-1}\wt{T}_j(I_{E^{\otimes j}} \otimes P)L^jh, \:\: h \in \mathcal{H}.
 \end{align}
 
\begin{lemma}\label{L2}
$\mbox{ker} L^n=\bigvee\{\wt{T}_j(\eta_j \otimes w): \: \eta_j \in E^{\otimes j}, w \in \mathcal{W}, j=0,1, \dots ,n-1\}.$
\end{lemma}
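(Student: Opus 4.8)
The plan is to prove the two inclusions separately, writing $\mathcal{M}_n:=\bigvee\{\wt{T}_j(\eta_j\otimes w):\eta_j\in E^{\otimes j},\,w\in\mathcal{W},\,0\le j\le n-1\}$ for the right-hand side. For the inclusion $\ker L^n\subseteq\mathcal{M}_n$, I would first record that $\wt{T}$ is injective (being left invertible), hence so is every $I_{E^{\otimes k}}\otimes\wt{T}$ and therefore their product $\wt{T}_n=\wt{T}(I_E\otimes\wt{T})\cdots(I_{E^{\otimes n-1}}\otimes\wt{T})$; consequently $\ker L^n=\ker(\wt{T}_nL^n)$. Then I invoke the identity displayed immediately before the lemma,
\[
(I_{\mathcal{H}}-\wt{T}_nL^n)h=\sum_{j=0}^{n-1}\wt{T}_j(I_{E^{\otimes j}}\otimes P)L^jh .
\]
Each summand lies in $\mathcal{M}_n$: since $P=P_{\mathcal{W}}\in\sigma(\mathcal{M})'$, the operator $I_{E^{\otimes j}}\otimes P$ is a well-defined projection whose range is the closed span of the vectors $\eta_j\otimes w$ with $w\in\mathcal{W}$, and $\wt{T}_j$ is continuous, so $\wt{T}_j(I_{E^{\otimes j}}\otimes P)L^jh\in\mathfrak{L}_j(\mathcal{W})\subseteq\mathcal{M}_n$. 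Thus the whole right-hand side is in $\mathcal{M}_n$, and for $h\in\ker L^n=\ker(\wt{T}_nL^n)$ we get $h=(I_{\mathcal{H}}-\wt{T}_nL^n)h\in\mathcal{M}_n$.

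For the reverse inclusion $\mathcal{M}_n\subseteq\ker L^n$, the key computation is that $L^n\wt{T}_j=I_{E^{\otimes j}}\otimes L^{n-j}$ for $0\le j\le n$. This I would obtain by unwinding the definitions: from $L\wt{T}=I_{E\otimes\mathcal{H}}$ an easy induction gives $L^j\wt{T}_j=I_{E^{\otimes j}\otimes\mathcal{H}}$, while factoring off the last $j$ creation/left-inverse steps yields $L^n=(I_{E^{\otimes j}}\otimes L^{n-j})L^j$; composing the two gives the claimed formula. Applying it to a generator, $L^n\wt{T}_j(\eta_j\otimes w)=\eta_j\otimes L^{n-j}w$, and since $j\le n-1$ the operator $L^{n-j}$ has a rightmost factor $L$ annihilating $w\in\mathcal{W}=\ker L$, so $L^{n-j}w=0$. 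Hence every generator of $\mathcal{M}_n$ lies in the closed subspace $\ker L^n$, giving $\mathcal{M}_n\subseteq\ker L^n$ and, with the first inclusion, equality.

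The only delicate point I anticipate is the index bookkeeping behind $L^n=(I_{E^{\otimes j}}\otimes L^{n-j})L^j$ and $L^j\wt{T}_j=I$, which rests on the associativity identifications $E^{\otimes n}\otimes_{\phi}(E\otimes_{\sigma}\mathcal{H})\cong E^{\otimes n+1}\otimes_{\sigma}\mathcal{H}$ and on the behaviour of the ampliations $I_{E^{\otimes k}}\otimes(-)$ under composition; I would state these identifications once at the outset so the subsequent manipulations read cleanly. The remaining verifications—injectivity of $\wt{T}_n$, and that $\operatorname{ran}(I_{E^{\otimes j}}\otimes P)$ is exactly the closed span of the $\eta_j\otimes w$—are routine consequences of $P\in\sigma(\mathcal{M})'$ and the left invertibility of $\wt{T}$.
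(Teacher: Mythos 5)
Your proof is correct and follows essentially the same route as the paper: the inclusion $\ker L^n\subseteq\mathcal{M}_n$ via the telescoping identity $h-\wt{T}_nL^nh=\sum_{j=0}^{n-1}\wt{T}_j(I_{E^{\otimes j}}\otimes P)L^jh$, and the reverse inclusion from $L\wt{T}=I_{E\otimes\mathcal{H}}$ together with $\ker L=\ker\wt{T}^*=\mathcal{W}$ (which you spell out as $L^n\wt{T}_j=I_{E^{\otimes j}}\otimes L^{n-j}$, a detail the paper leaves implicit). The only cosmetic remark is that the injectivity of $\wt{T}_n$ is not needed for the first inclusion, since $L^nh=0$ trivially gives $\wt{T}_nL^nh=0$.
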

\begin{proof}
 If $L^nw=0,$ then $$w=w-\wt{T}_nL^nw=\sum_{j=0}^{n-1}\wt{T}_j(I_{E^{\otimes j}} \otimes P)L^jw,$$  and it shows the inclusion $\subseteq.$ Since $L\wt{T}=I_{\mathcal H},$ we have $\mbox{ker} L=\mbox{ker} \wt{T}^*=\mathcal{W},$ and the reverse inclusion follows.
\end{proof}

Let $(\sigma, T)$ be a concave completely bounded, covariant representation of $E$ on  $\mathcal{H}.$ Define the operator $U: \mathcal{H} \longrightarrow \mathcal{F}(E) \otimes \mathcal{W}$ by $$Uh:=\sum_{n=0}^{\infty}(I_{E^{\otimes n}} \otimes P)L^nh,$$
where $P=I_{\mathcal{H}}-Q, \: Q=\wt{T}L.$  Then the operator $U$ is well defined contraction. Indeed,
\begin{align*}
\|Uh\|^2=& \sum_{n=0}^{\infty}\langle L^{n*}(I_{E^{\otimes n}} \otimes (I_{\mathcal H}-\wt{T}L))L^nh, h\rangle\\
=&\sum_{n=0}^{\infty}\langle(L^{n*}L^n- L^{n*}(I_{E^{\otimes n}} \otimes \wt{T}(\wt{T}^*\wt{T})^{-1}\wt{T}^*\wt{T})L^{n+1})h,h \rangle \\
=& 
\sum_{n=0}^{\infty}\langle(L^{n*}L^n- L^{(n+1)*}(I_{E^{\otimes n}} \otimes \wt{T}^*\wt{T})L^{n+1})h,h \rangle \\
\leq &\sum_{n=0}^{\infty} \langle(L^{n*}L^n- L^{(n+1)*}L^{n+1})h,h \rangle~ (because ~ \wt{T}^*\wt{T} \geq I_{E\otimes\mathcal H}) \\
=& \|h\|^2- \lim_{n \rightarrow \infty}\langle L^{n*}L^nh, h \rangle \leq \|h\|^2  (\mbox{by Equation}~\eqref{claim1}).
\end{align*}
\begin{lemma} \label{L32}
$\mbox{ker}  U=\mathcal{H}_{\infty}.$
\end{lemma}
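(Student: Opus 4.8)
The plan is to prove the two inclusions $\mathcal{H}_\infty\subseteq\ker U$ and $\ker U\subseteq\mathcal{H}_\infty$ separately, the common engine being the telescoping identity recorded just above,
\[
(I_{\mathcal{H}}-\wt{T}_nL^n)h=\sum_{j=0}^{n-1}\wt{T}_j(I_{E^{\otimes j}}\otimes P)L^jh,\qquad h\in\mathcal{H},
\]
together with the fact that $L^n$ is a left inverse of $\wt{T}_n$.

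First I would record the structural facts I need. Since $(\sigma,T)$ is concave, Lemma~\ref{L1}(1) makes $\wt{T}$ bounded below, so $\wt{T}^*\wt{T}$ is invertible; hence $L\wt{T}=(\wt{T}^*\wt{T})^{-1}\wt{T}^*\wt{T}=I_{\mathcal{H}}$, and iterating this gives $L^n\wt{T}_n=I_{\mathcal{H}}$. In particular each $\wt{T}_n$ is injective. Next, by construction $Uh=\sum_{j\ge0}(I_{E^{\otimes j}}\otimes P)L^jh$ is an orthogonal sum inside $\mathcal{F}(E)\otimes\mathcal{W}=\bigoplus_{j\ge0}E^{\otimes j}\otimes\mathcal{W}$, because its $j$-th summand lies in the $j$-th orthogonal summand $E^{\otimes j}\otimes\mathcal{W}$. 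Consequently $Uh=0$ holds precisely when every coordinate $(I_{E^{\otimes j}}\otimes P)L^jh$ vanishes.

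The inclusion $\ker U\subseteq\mathcal{H}_\infty$ is then immediate: if $Uh=0$, all coordinates vanish, so the right-hand side of the identity is $0$ for every $n$, giving $h=\wt{T}_nL^nh\in\wt{T}_n(E^{\otimes n}\otimes\mathcal{H})$; intersecting over $n$ yields $h\in\mathcal{H}_\infty$. For the reverse inclusion, fix $h\in\mathcal{H}_\infty$ and for each $n$ choose $\zeta_n$ with $h=\wt{T}_n\zeta_n$; applying $L^n$ and using $L^n\wt{T}_n=I_{\mathcal{H}}$ gives $L^nh=\zeta_n$, hence $\wt{T}_nL^nh=h$ and the right-hand side of the identity vanishes for every $n$. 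Writing $S_n$ for that (vanishing) partial sum, the consecutive difference $S_{n+1}-S_n=\wt{T}_n(I_{E^{\otimes n}}\otimes P)L^nh$ is therefore $0$, and injectivity of $\wt{T}_n$ forces the $n$-th coordinate $(I_{E^{\otimes n}}\otimes P)L^nh$ to vanish; since $S_1=Ph$ kills the $j=0$ coordinate as well, all coordinates of $Uh$ are zero and $Uh=0$.

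The step where I expect the only real care to be needed is the passage, in the second inclusion, from ``all the partial sums $S_n$ vanish'' to ``every coordinate of $U$ vanishes.'' The tempting route would be to declare the summands $\wt{T}_j(E^{\otimes j}\otimes\mathcal{W})=\mathfrak{L}_j(\mathcal{W})$ mutually orthogonal, but this \emph{fails} here because $\wt{T}$ is merely concave rather than isometric, so the defining property of a wandering subspace ($\mathcal{W}\perp\mathfrak{L}_n(\mathcal{W})$) does not upgrade to mutual orthogonality of the $\mathfrak{L}_j(\mathcal{W})$. The correct device is the telescoping-difference argument above, which extracts each coordinate using only the injectivity of $\wt{T}_n$; everything else is routine bookkeeping with the left inverse $L^n$ and the convention that the $j=0$ term equals $Ph$.
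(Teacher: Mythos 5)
Your proof is correct and follows essentially the same route as the paper's: both directions rest on the telescoping identity $(I_{\mathcal H}-\wt{T}_nL^n)h=\sum_{j=0}^{n-1}\wt{T}_j(I_{E^{\otimes j}}\otimes P)L^jh$ together with the left-inverse relation $L^n\wt{T}_n=I_{E^{\otimes n}\otimes\mathcal H}$. The only (harmless) divergence is in the inclusion $\mathcal{H}_{\infty}\subseteq\ker U$, where the paper computes the $n$-th coordinate directly as $(I_{E^{\otimes n}}\otimes P\wt{T})(\eta_{n+1}\otimes h_{n+1})=0$ using $P\wt{T}=0$, whereas you reach the same conclusion from the vanishing of all partial sums via the injectivity of $\wt{T}_n$.
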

\begin{proof}
 Suppose that for all $n \in \mathbb{N},$ $(I_{E^{\otimes n}} \otimes P)L^nh=0.$  It follows that  $$h=h-\wt{T}_nL^nh+\wt{T}_nL^nh=\sum_{j=0}^{n-1}\wt{T}_j(I_{E^{\otimes j}} \otimes P)L^jh+\wt{T}_nL^nh=\wt{T}_nL^nh,$$ 
and we get $h \in \mathcal{H}_{\infty}.$  Conversely, assume that $h \in \mathcal{H}_{\infty},$ then 
$$ h=\wt{T}_{n+1}(\eta_{n+1} \otimes h_{n+1})~\mbox{ for some }~\eta_{n+1} \in E^{\otimes n+1}, h_{n+1} \in \mathcal{H}, n \in \mathbb{N}.$$ Hence
\begin{align*}
(I_{E^{\otimes n}} \otimes P)L^nh=&(I_{E^{\otimes n}} \otimes P)L^n\wt{T}_{n+1}(\eta_{n+1} \otimes h_{n+1})\\=&(I_{E^{\otimes n}} \otimes P\wt{T})(\eta_{n+1} \otimes h_{n+1})=0.\qedhere
\end{align*}
\end{proof}
By Remark \ref{rm1}, if $(\sigma, T)$ is analytic then  for all $h \in \mathcal{H},$ we have $L^n h\rightarrow 0$ as $n \rightarrow \infty.$
Therefore, $U$ is unitary if and only if $(\sigma, T)$ is analytic.

Let $(\sigma, T)$ be a completely bounded, covariant representation of $E$ on a Hilbert space $\mathcal{H},$ such that $\wt{T}$ is left invertible.
Define $\wt{T}': E \otimes \mathcal{H} \longrightarrow \mathcal{H}$ by  $$\wt{T}':=\wt{T}(\wt{T}^*\wt{T})^{-1}.$$ 
Since $\wt{T}^*\wt{T}(\phi(a) \otimes I_{\mathcal{H}})=\wt{T}^*\sigma(a)\wt{T}=(\phi(a) \otimes I_{\mathcal{H}})\wt{T}\wt{T}^*$ for each $a \in \mathcal{M},$  it is easy to observe that  $(\wt{T}^*\wt{T})^{-1}(\phi(a) \otimes I_{\mathcal{H}})=(\phi(a) \otimes I_{\mathcal{H}})(\wt{T}^*\wt{T})^{-1}$ and hence $$\wt{T}'(\phi(a) \otimes I_{\mathcal{H}})=\wt{T}(\phi(a) \otimes I_{\mathcal{H}})(\wt{T}^*\wt{T})^{-1}=\sigma(a)\wt{T}'.$$
Therefore the corresponding covariant representation $(\sigma, T')$  of $E$ on the Hilbert space $\mathcal{H}$ is completely bounded by Lemma \ref{MSL}. 
\begin{definition}
We say that  the covariant representation $(\sigma, T')$ defined above is {\it Cauchy dual} of $(\sigma, T).$
\end{definition}
\begin{notation}
$\mathcal{H}'_{\infty}:=\bigcap_{n \geq 1}\wt{T}'_n(E^{\otimes n} \otimes \mathcal{H}) \:\:\mbox{and}\:\:  \mathcal{W}':=\mbox{ker} \wt{T}'^*. $
\end{notation}
Obviously, $\mathcal{W'}=\mbox{ker}\wt{T}^*=\mathcal{W}.$ 
\begin{proposition}\label{CD}
(1)$(\mathcal{H}'_{\infty})^{\bot}=\bigvee_{n \geq 0}\mathfrak{L}_n(\mathcal{W});$ \\
(2)$\mathcal{H}_{\infty}^{\: \bot}=\bigvee_{n \geq 0}\mathfrak{L}_n'(\mathcal{W}),$ where $$\mathfrak{L}_n'(\mathcal{W}):=\bigvee \{T'(\xi_1)T'(\xi_2)\cdots T'(\xi_n)h \: : \: \xi_i\in E , ~1\leq i\leq n, ~h \in \mathcal{W} \}.$$
\end{proposition}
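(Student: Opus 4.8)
The plan is to reduce everything to the operator $L$ from the left-invertible setting and then exploit the fact that the Cauchy-dual construction is an involution. The crucial preliminary observation is that, since $(\wt{T}^*\wt{T})^{-1}$ is positive and self-adjoint, the adjoint of $\wt{T}'=\wt{T}(\wt{T}^*\wt{T})^{-1}$ is exactly
\[
\wt{T}'^*=(\wt{T}^*\wt{T})^{-1}\wt{T}^*=L.
\]
Applying the factorization \eqref{eqnn} to $(\sigma,T')$ and taking adjoints on the appropriate side then gives
\[
\wt{T}'^*_n=(I_{E^{\otimes n-1}}\ot \wt{T}'^*)\cdots(I_E\ot \wt{T}'^*)\wt{T}'^*=(I_{E^{\otimes n-1}}\ot L)\cdots(I_E\ot L)L=L^n,
\]
so that the kernels of the iterated Cauchy-dual adjoints coincide with the kernels of the $L^n$.

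For part (1), I would first note that $\wt{T}'$ is bounded below, since $\wt{T}'^*\wt{T}'=(\wt{T}^*\wt{T})^{-1}\geq \|\wt{T}\|^{-2}I_{E\ot\mathcal H}$; hence each $\wt{T}'_n$ is bounded below, every range $\wt{T}'_n(E^{\otimes n}\ot\mathcal H)$ is closed, and these ranges decrease with $n$. Consequently
\[
(\mathcal{H}'_{\infty})^{\bot}=\Big(\bigcap_{n\geq 1}\wt{T}'_n(E^{\otimes n}\ot\mathcal H)\Big)^{\bot}=\overline{\bigvee_{n\geq1}\ker \wt{T}'^*_n}=\overline{\bigcup_{n\geq1}\ker L^n}.
\]
Invoking the identity $\wt{T}'^*_n=L^n$ together with Lemma \ref{L2}, which gives $\ker L^n=\bigvee_{j=0}^{n-1}\mathfrak{L}_j(\mathcal{W})$, and passing to the closed union over $n$ yields $(\mathcal{H}'_{\infty})^{\bot}=\bigvee_{n\geq0}\mathfrak{L}_n(\mathcal{W})$.

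For part (2), I would appeal to the involutivity of the Cauchy dual. A short computation shows $\wt{T}'^*\wt{T}'=(\wt{T}^*\wt{T})^{-1}$, so $(\wt{T}'^*\wt{T}')^{-1}=\wt{T}^*\wt{T}$ and the Cauchy dual of $(\sigma,T')$ is
\[
\wt{T}''=\wt{T}'(\wt{T}'^*\wt{T}')^{-1}=\wt{T}(\wt{T}^*\wt{T})^{-1}\wt{T}^*\wt{T}=\wt{T},
\]
that is, $(\sigma,T'')=(\sigma,T)$. Since $\wt{T}'$ is itself left invertible, part (1) applies verbatim to $(\sigma,T')$; reading that statement for $(\sigma,T')$—whose Cauchy dual is $(\sigma,T)$, so that $(\mathcal H')'_\infty=\mathcal H_\infty$, and whose wandering subspace is $\mathcal W'=\mathcal W$—produces precisely $\mathcal{H}_{\infty}^{\,\bot}=\bigvee_{n \geq 0}\mathfrak{L}_n'(\mathcal{W})$.

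The main obstacle is bookkeeping rather than conceptual: one must be careful that the ranges of the $\wt{T}'_n$ are closed, so that the orthogonal complement of the intersection is the closed span of the individual complements, and that the factorization \eqref{eqnn} is applied on the correct side when computing $\wt{T}'^*_n$. Once the key identity $\wt{T}'^*_n=L^n$ is in place, Lemma \ref{L2} supplies (1) directly and the involutivity of the Cauchy dual makes (2) immediate.
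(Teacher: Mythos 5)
Your proposal is correct and follows essentially the same route as the paper: both rest on the identity $\wt{T}'^*_n=L^n$, the duality $\bigl(\bigcap_n \operatorname{ran}\wt{T}'_n\bigr)^{\bot}=\bigvee_n\ker L^n$ combined with Lemma \ref{L2}, and the involutivity $(\wt{T}')'=\wt{T}$ to deduce (2) from (1). Your explicit verification that $\wt{T}'$ is bounded below (so the ranges are closed) is a small point the paper leaves implicit, but it is not a different argument.
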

\begin{proof}
First we shall prove {\it(1)}. Note that
\begin{align*}
(\mathcal{H}'_{\infty})^{\bot} =& \left(\bigcap_{n \geq 0}L^{*n}(E^{\otimes n} \otimes \mathcal{H})\right)^{ \bot}=\bigvee_{n \geq0}\big(L^{*n}(E^{\otimes n} \otimes \mathcal{H})\big)^{\bot}\\
=&\bigvee_{n \geq0}\mbox{ker}L^{n}=\bigvee_{n \geq0}\mathfrak{L}_n(\mathcal{W}).
\end{align*}
The last equality follows from Lemma \ref{L2}. Note that
\begin{align}\label{eqn1}
(\wt{T'})'=\wt{T'}(\wt{T'}^*\wt{T'})^{-1}=\wt{T},
\end{align}
that is, the Cauchy dual of  $(\sigma, T')$  is $(\sigma, T).$ Hence {\it(2)} follows.
\end{proof}

\begin{corollary}
\begin{enumerate}
\item[(1)] $(\sigma, T)$ is analytic if and only if $(\sigma, T')$  has the generating wandering subspace property;
\item[(2)] $(\sigma, T)$ has the generating wandering subspace property if and only if $(\sigma, T')$ is analytic.
\end{enumerate}
\end{corollary}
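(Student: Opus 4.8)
The plan is to read off both equivalences directly from Proposition~\ref{CD}, exploiting the symmetry that the Cauchy dual operation is involutive, as recorded in Equation~\eqref{eqn1}. Recall that a completely bounded covariant representation $(\sigma, S)$ has the generating wandering subspace property exactly when $\mathcal{H} = \bigvee_{n \geq 0} \mathfrak{L}_n(\mathcal{W}_S)$ with $\mathcal{W}_S = \mbox{ker}\,\wt{S}^*$, and it is analytic exactly when $\bigcap_{n \geq 1} \wt{S}_n(E^{\otimes n} \otimes \mathcal{H}) = \{0\}$. The key observation to set up first is that, since $\wt{T}$ is left invertible, so is its Cauchy dual $\wt{T}'$ (by Equation~\eqref{eqn1} the dual of $(\sigma, T')$ is again $(\sigma, T)$, whose associated $\wt{T}$ is left invertible), so all the notation of this section applies symmetrically to both representations. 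Moreover $\mathcal{W}' = \mathcal{W}$, which was already noted before Proposition~\ref{CD}.

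For part~(1), I would argue as follows. By Proposition~\ref{CD}(2), $\mathcal{H}_{\infty}^{\bot} = \bigvee_{n \geq 0} \mathfrak{L}_n'(\mathcal{W})$. The right-hand side is all of $\mathcal{H}$ precisely when $(\sigma, T')$ has the generating wandering subspace property, since $\mathcal{W}' = \mathcal{W}$ serves as its wandering subspace and $\mathfrak{L}_n'(\mathcal{W})$ is built from $T'$. On the other hand, $\mathcal{H}_{\infty}^{\bot} = \mathcal{H}$ is equivalent to $\mathcal{H}_{\infty} = \{0\}$, which by the definition of $\mathcal{H}_{\infty} = \bigcap_{n \geq 1} \wt{T}_n(E^{\otimes n} \otimes \mathcal{H})$ is exactly the statement that $(\sigma, T)$ is analytic. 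Chaining these two equivalences gives part~(1).

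Part~(2) is then immediate by applying part~(1) to the Cauchy dual pair in the reverse direction. Since $(\sigma, T)$ is the Cauchy dual of $(\sigma, T')$ by Equation~\eqref{eqn1}, substituting $(\sigma, T')$ in place of $(\sigma, T)$ in the statement of part~(1) yields: $(\sigma, T')$ is analytic if and only if $(\sigma, T)$ has the generating wandering subspace property. This is precisely part~(2). Alternatively, one can run the same Proposition~\ref{CD} argument verbatim using part~(1) of that proposition, $(\mathcal{H}'_{\infty})^{\bot} = \bigvee_{n \geq 0} \mathfrak{L}_n(\mathcal{W})$, to get the same conclusion directly.

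I do not anticipate a genuine obstacle here, as the corollary is essentially a bookkeeping consequence of Proposition~\ref{CD} together with the involutivity in Equation~\eqref{eqn1}. The one point requiring mild care is ensuring that the generating wandering subspace property for $(\sigma, T')$ is correctly phrased in terms of $\mathcal{W}' = \mbox{ker}\,\wt{T}'^*$ and the operators $T'(\xi)$ generating $\mathfrak{L}_n'(\mathcal{W})$, rather than the original $T(\xi)$; this is where the symmetric applicability of the section's constructions to the left-invertible operator $\wt{T}'$ must be invoked, and why establishing that symmetry at the outset is the cleanest route.
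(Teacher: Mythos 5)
Your proposal is correct and is exactly the argument the paper intends: the corollary is stated without proof precisely because it is an immediate reading of Proposition~\ref{CD} (parts (1) and (2)) together with the involutivity $(\wt{T}')'=\wt{T}$ from Equation~\eqref{eqn1} and the identity $\mathcal{W}'=\mathcal{W}$. No difference in approach to report.
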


\begin{corollary}\label{C35}
$(\sigma, T)$ admits Wold-type decomposition if and only if $(\sigma, T')$ does. Indeed, $\mathcal{H}_{\infty}=\mathcal{H}_{\infty}'$ and $\bigvee_{n \geq 0}\mathfrak{L}_n(\mathcal{W})=\bigvee_{n \geq 0}\mathfrak{L}'_n(\mathcal{W}).$
\end{corollary}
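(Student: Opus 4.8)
The plan is to reduce the whole equivalence to the single identity $\mathcal{H}_{\infty}=\mathcal{H}'_{\infty}$: by Proposition \ref{CD} this identity is exactly what the spatial part of the Wold-type decomposition amounts to, for \emph{either} representation simultaneously, and once it holds the remaining (reducing, isometric, fully co-isometric) conditions transfer for free because the Cauchy dual agrees with $(\sigma,T)$ on the isometric part. So I would first extract from Proposition \ref{CD} the two orthogonal-complement formulas
$\mathcal{H}'_{\infty}=\big(\bigvee_{n\geq0}\mathfrak{L}_n(\mathcal{W})\big)^{\bot}$ and $\mathcal{H}_{\infty}=\big(\bigvee_{n\geq0}\mathfrak{L}'_n(\mathcal{W})\big)^{\bot}$, and keep them as the only external inputs.

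Suppose $(\sigma,T)$ admits the Wold-type decomposition, so that $\mathcal{H}=\bigvee_{n\geq0}\mathfrak{L}_n(\mathcal{W})\oplus\mathcal{H}_{\infty}$ is an orthogonal direct sum. Then $\mathcal{H}_{\infty}$ is the orthogonal complement of $\bigvee_{n\geq0}\mathfrak{L}_n(\mathcal{W})$, and comparing with the first formula forces $\mathcal{H}_{\infty}=\mathcal{H}'_{\infty}$, the first identity quoted in the statement. Feeding this back into both formulas gives $\bigvee_{n\geq0}\mathfrak{L}'_n(\mathcal{W})=\mathcal{H}_{\infty}^{\bot}=(\mathcal{H}'_{\infty})^{\bot}=\bigvee_{n\geq0}\mathfrak{L}_n(\mathcal{W})$, which is the second quoted identity, and in particular $\mathcal{H}=\bigvee_{n\geq0}\mathfrak{L}'_n(\mathcal{W})\oplus\mathcal{H}'_{\infty}$, the spatial decomposition for the Cauchy dual.

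Next I would verify the non-spatial requirements for $(\sigma,T')$. The crucial observation is that the Cauchy dual coincides with $(\sigma,T)$ on $\mathcal{H}_{\infty}$: since $(\sigma,T)|_{\mathcal{H}_{\infty}}$ is isometric and $\mathcal{H}_{\infty}$ reduces $(\sigma,T)$, the positive operator $\wt{T}^*\wt{T}$ restricts to the identity on $E\ot\mathcal{H}_{\infty}$, so $\wt{T}'=\wt{T}(\wt{T}^*\wt{T})^{-1}$ agrees with $\wt{T}$ there. Hence $(\sigma,T')|_{\mathcal{H}'_{\infty}}=(\sigma,T)|_{\mathcal{H}_{\infty}}$ is again isometric and fully co-isometric. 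The reducing property is inherited the same way: the representation $\sigma$ is unchanged, $\mathcal{H}_{\infty}=\mathcal{H}'_{\infty}$ is $T'(\xi)$-invariant because $T'(\xi)$ equals $T(\xi)$ on it, and $\mathcal{H}_{\infty}^{\bot}=\bigvee_{n\geq0}\mathfrak{L}'_n(\mathcal{W})$ is $T'(\xi)$-invariant directly from its construction out of the operators $T'(\xi)$.

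Finally, the reverse implication is obtained without new work: by Equation \eqref{eqn1} the Cauchy dual of $(\sigma,T')$ is $(\sigma,T)$, and under this passage the roles of $\mathcal{H}_{\infty},\mathcal{H}'_{\infty}$ and of $\mathfrak{L}_n,\mathfrak{L}'_n$ are interchanged, so the same argument with $(\sigma,T')$ in place of $(\sigma,T)$ turns a Wold-type decomposition for $(\sigma,T')$ into one for $(\sigma,T)$. I expect the only genuine subtlety to be the transfer of the isometric and fully co-isometric conditions, which is resolved precisely by the equality $\wt{T}'=\wt{T}$ on $E\ot\mathcal{H}_{\infty}$; the rest is bookkeeping with the orthogonal complements furnished by Proposition \ref{CD}.
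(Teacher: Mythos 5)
Your proposal is correct and follows essentially the same route as the paper, which states this corollary without proof as an immediate consequence of Proposition \ref{CD} together with the involution property $(\wt{T'})'=\wt{T}$ from Equation \eqref{eqn1}. Your write-up simply makes explicit the two points the paper leaves to the reader: that the spatial identities follow from comparing orthogonal complements, and that the isometric/fully co-isometric and reducing conditions transfer because $\wt{T}'=\wt{T}$ on $E\otimes\mathcal{H}_{\infty}$.
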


\begin{proposition}
If $\mathcal{H}_{\infty}'$ is $(\sigma, T')$-reducing, then $\mathcal{H}_{\infty}' \subseteq \mathcal{H}_{\infty}.$
\end{proposition}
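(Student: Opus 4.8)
The plan is to prove the inclusion by passing to orthogonal complements and invoking Proposition \ref{CD} in both of its parts. Recall that by Proposition \ref{CD} we have $(\mathcal{H}'_{\infty})^{\bot}=\bigvee_{n \geq 0}\mathfrak{L}_n(\mathcal{W})$ and $\mathcal{H}_{\infty}^{\bot}=\bigvee_{n \geq 0}\mathfrak{L}'_n(\mathcal{W})$. Thus the desired inclusion $\mathcal{H}'_{\infty} \subseteq \mathcal{H}_{\infty}$ is equivalent, after taking complements, to $\bigvee_{n \geq 0}\mathfrak{L}'_n(\mathcal{W}) \subseteq \bigvee_{n \geq 0}\mathfrak{L}_n(\mathcal{W}) = (\mathcal{H}'_{\infty})^{\bot}$. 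So it suffices to show that every subspace $\mathfrak{L}'_n(\mathcal{W})$ lands inside $(\mathcal{H}'_{\infty})^{\bot}$.

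First I would record the base case: since $\mathfrak{L}_0(\mathcal{W})=\mathcal{W}$ appears in the join describing $(\mathcal{H}'_{\infty})^{\bot}$, we have $\mathcal{W} \subseteq (\mathcal{H}'_{\infty})^{\bot}$; here I am using the already-noted identity $\mathcal{W}'=\mathcal{W}$ so that both parts of Proposition \ref{CD} refer to the same wandering subspace. Next I would use the reducing hypothesis: because $\mathcal{H}'_{\infty}$ is $(\sigma, T')$-reducing, its orthogonal complement $(\mathcal{H}'_{\infty})^{\bot}$ is $(\sigma, T')$-invariant, that is, $T'(\xi)(\mathcal{H}'_{\infty})^{\bot} \subseteq (\mathcal{H}'_{\infty})^{\bot}$ for every $\xi \in E$. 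This is the one and only place the hypothesis enters, and it is the crux of the argument.

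With these two facts the inclusion follows by iterated application of the creation-type operators. For $h \in \mathcal{W} \subseteq (\mathcal{H}'_{\infty})^{\bot}$ and $\xi_1,\ldots,\xi_n \in E$, applying $T'(\xi_n), T'(\xi_{n-1}),\ldots, T'(\xi_1)$ in succession keeps the vector inside the $(\sigma, T')$-invariant closed subspace $(\mathcal{H}'_{\infty})^{\bot}$, so $T'(\xi_1)\cdots T'(\xi_n)h \in (\mathcal{H}'_{\infty})^{\bot}$. Taking closed linear spans gives $\mathfrak{L}'_n(\mathcal{W}) \subseteq (\mathcal{H}'_{\infty})^{\bot}$ for every $n \geq 0$, whence $\mathcal{H}_{\infty}^{\bot}=\bigvee_{n \geq 0}\mathfrak{L}'_n(\mathcal{W}) \subseteq (\mathcal{H}'_{\infty})^{\bot}$. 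Taking orthogonal complements one last time yields $\mathcal{H}'_{\infty} \subseteq \mathcal{H}_{\infty}$, as required.

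As for difficulty, there is no serious analytic obstacle; the whole proof is a formal manipulation once Proposition \ref{CD} is in hand. The only point requiring care is to see that the reducing (rather than merely invariant) hypothesis is genuinely needed and correctly used: invariance of $\mathcal{H}'_{\infty}$ itself is automatic, but what the argument actually consumes is invariance of the \emph{complement} $(\mathcal{H}'_{\infty})^{\bot}$, which is precisely the extra content of ``reducing.'' I would also double-check that $(\mathcal{H}'_{\infty})^{\bot}$ is closed, so that the closed-span step is legitimate, and that $\mathcal{W}'=\mathcal{W}$ is invoked consistently when quoting both parts of Proposition \ref{CD}.
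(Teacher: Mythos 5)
Your proof is correct, but it takes a genuinely different route from the paper's. You dualize: using both halves of Proposition \ref{CD} you reduce the claim to $\mathcal{H}_{\infty}^{\perp}=\bigvee_{n\geq 0}\mathfrak{L}'_n(\mathcal{W})\subseteq(\mathcal{H}'_{\infty})^{\perp}$, and the only input you take from the hypothesis is that the \emph{complement} $(\mathcal{H}'_{\infty})^{\perp}$ is invariant under every $T'(\xi)$, which together with $\mathcal{W}=\mathfrak{L}_0(\mathcal{W})\subseteq(\mathcal{H}'_{\infty})^{\perp}$ propagates to all of $\bigvee_{n}\mathfrak{L}'_n(\mathcal{W})$. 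The paper instead works inside $\mathcal{H}'_{\infty}$: from reducibility it deduces that $\wt{T'}$ restricts to an invertible operator $\wt{T'}_{\infty}$ on $E\otimes\mathcal{H}'_{\infty}$, writes $\wt{T}|_{E\otimes\mathcal{H}'_{\infty}}=\wt{T'}_{\infty}(\wt{T'}^*_{\infty}\wt{T'}_{\infty})^{-1}$ using $(\wt{T'})'=\wt{T}$, concludes that this restriction is invertible, hence $\mathcal{H}'_{\infty}=\wt{T}(E\otimes\mathcal{H}'_{\infty})$, and iterates to place $\mathcal{H}'_{\infty}$ inside every $\wt{T}_n(E^{\otimes n}\otimes\mathcal{H})$. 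Your route leans on Proposition \ref{CD}(2) (that is, on part (1) applied to the Cauchy dual, together with $\mathcal{W}'=\mathcal{W}$), and in exchange consumes only the invariance of the complement; the paper's route avoids part (2) entirely but must verify the invertibility of the restricted operators. Both are sound: yours is the shorter formal deduction once Proposition \ref{CD} is on the table, while the paper's yields the stronger intermediate fact $\mathcal{H}'_{\infty}=\wt{T}(E\otimes\mathcal{H}'_{\infty})$ along the way. Your closing cautions (closedness of the relevant subspaces, and that reducibility rather than mere invariance is what the argument consumes) are well placed and check out.
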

\begin{proof}
Suppose $\mathcal{H}_{\infty}'$ is  $(\sigma, T')$-reducing. From Equation \eqref{eqn1}, it follows that $\wt{T}=\wt{T'}(\wt{T'}^*\wt{T'})^{-1}$. Thus $\mathcal{H}_{\infty}'$ is also  $(\sigma, T)$-reducing. Note that  $ \wt{T}|_{E \otimes \mathcal{H}_{\infty}'}=\wt{T'}_{\infty}(\wt{T'}^*_{\infty}\wt{T'}_{\infty})^{-1},$ where $\wt{T'}_{\infty}:=\wt{T'}|_{E \otimes \mathcal{H}_{\infty}'}.$ Therefore, $\mathcal{H}_{\infty}'$ is  $(\sigma, T)|_{\mathcal{H}_{\infty}'}$-reducing.  Since $\wt{T'}_{\infty}$ is invertible, we conclude that $\wt{T}|_{E \otimes \mathcal{H}_{\infty}'}$ is also invertible, hence $\mathcal{H}_{\infty}'=\wt{T}(E \otimes \mathcal{H}_{\infty}') \subseteq \mathcal{H}_{\infty},$ which proves the  proposition.
\end{proof}
\begin{corollary}
Let $(\sigma, T)$ be analytic and $\mathcal{H}_{\infty}'$ be $(\sigma, T')$-reducing. Then $(\sigma, T)$  has the generating wandering subspace property.
\end{corollary}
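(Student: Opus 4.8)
The plan is to reduce the generating wandering subspace property to the single assertion $\mathcal{H}_\infty' = \{0\}$, and then to obtain that assertion by chaining the immediately preceding proposition with the definition of analyticity. The whole argument is a short composition of results already in hand, so I do not expect any genuine obstacle; the only point requiring attention is the correct identification of what each hypothesis contributes.

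First I would invoke Proposition \ref{CD}(1), which identifies $\bigvee_{n \geq 0}\mathfrak{L}_n(\mathcal{W})$ with $(\mathcal{H}_\infty')^{\bot}$. Consequently the generating wandering subspace property for $(\sigma, T)$, namely $\mathcal{H} = \bigvee_{n \geq 0}\mathfrak{L}_n(\mathcal{W})$ with $\mathcal{W} = \mbox{ker}\,\wt{T}^*$, is equivalent to $(\mathcal{H}_\infty')^{\bot} = \mathcal{H}$, that is, to $\mathcal{H}_\infty' = \{0\}$. This reformulates the goal entirely in terms of the Cauchy dual's core subspace.

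Next I would observe that the hypothesis that $(\sigma, T)$ is analytic says exactly that $\mathcal{H}_\infty = \bigcap_{n \geq 1}\wt{T}_n(E^{\otimes n} \otimes \mathcal{H}) = \{0\}$. Since $\mathcal{H}_\infty'$ is assumed to be $(\sigma, T')$-reducing, the preceding proposition yields the inclusion $\mathcal{H}_\infty' \subseteq \mathcal{H}_\infty$. Combining these two facts forces $\mathcal{H}_\infty' \subseteq \{0\}$, hence $\mathcal{H}_\infty' = \{0\}$.

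Putting the two steps together gives $\mathcal{H} = (\mathcal{H}_\infty')^{\bot} = \bigvee_{n \geq 0}\mathfrak{L}_n(\mathcal{W})$ with $\mathcal{W} = \mbox{ker}\,\wt{T}^*$, which is precisely the generating wandering subspace property. As noted, there is essentially no hard step: the substantive content has already been isolated in the preceding proposition (the inclusion $\mathcal{H}_\infty' \subseteq \mathcal{H}_\infty$ under the reducing hypothesis) and in Proposition \ref{CD}(1); the proof merely records that analyticity is the vanishing of $\mathcal{H}_\infty$ and then composes the two facts in one line.
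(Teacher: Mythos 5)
Your argument is correct and is exactly the intended one: the paper states this corollary without proof as an immediate consequence of the preceding proposition (giving $\mathcal{H}_{\infty}' \subseteq \mathcal{H}_{\infty} = \{0\}$ by analyticity) combined with Proposition \ref{CD}(1) (identifying $(\mathcal{H}_{\infty}')^{\bot}$ with $\bigvee_{n\geq 0}\mathfrak{L}_n(\mathcal{W})$, $\mathcal{W}=\mbox{ker}\,\wt{T}^*$). Nothing is missing.
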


%Here $(\rho, V)=(\sigma, T)|_{\mathcal{K}},$ $L$ is the left inverse of $\wt{V}$ such that $\mbox{ker}\wt{V}^*=\mbox{ker}L=\mathcal{K}\ominus \wt{V}(E \otimes \mathcal{K}), $ $P$ is the projection of $\mathcal{K}$ onto $\mathcal{K} \ominus\wt{V}(E \otimes \mathcal{K}),$ and  $\|\mathcal{D}_V(\xi \otimes k)\|^2=\|\wt{V}(\xi \otimes k)\|^2-\|\xi \otimes k\|,$ for all $\xi \in E, k \in \mathcal{K}$

The following theorem gives a sufficient condition for the wandering subspace property for an analytic covariant representation $(\sigma, T).$

\begin{theorem}\label{TC}
Let $(\sigma, T)$ be a concave, completely bounded, covariant representation of $E$ on  $\mathcal{H}.$
 Then $\mathcal{H}_{\infty}$ is  $(\sigma, T)$-reducing and $(\sigma, T)|_{\mathcal{H}_{\infty}}$ is an isometric and fully co-isometric, covariant  representation of $E$ on the Hilbert space $\mathcal{H}_{\infty}.$
\end{theorem}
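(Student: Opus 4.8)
The plan is to prove the stronger statement that $\wt{T}$ restricts to a \emph{unitary} $\wt{T}_\infty:=\wt{T}|_{E\ot\mathcal{H}_\infty}:E\ot\mathcal{H}_\infty\to\mathcal{H}_\infty$; once $\mathcal{H}_\infty$ is shown to be reducing, the conclusion ``isometric as well as fully co-isometric'' is precisely the assertion $\wt{T}_\infty^*\wt{T}_\infty=I_{E\ot\mathcal{H}_\infty}$ and $\wt{T}_\infty\wt{T}_\infty^*=I_{\mathcal{H}_\infty}$, i.e. that $\wt{T}_\infty$ is unitary. Throughout I write $V_n:=\wt{T}_n(E^{\ot n}\ot\mathcal{H})$, so that $\mathcal{H}_\infty=\bigcap_{n\ge1}V_n$, and I keep the operators $L=(\wt{T}^*\wt{T})^{-1}\wt{T}^*$, $L^n$, $P=I_{\mathcal H}-\wt{T}L$ and $\mathcal{D}:=(\wt{T}^*\wt{T}-I_{E\ot\mathcal{H}})^{1/2}$ of Section~\ref{Sec3}. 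Concavity makes all of these available: by Lemma~\ref{L1}(1) one has $\wt{T}^*\wt{T}\ge I_{E\ot\mathcal H}$, so $\wt{T}$, each $I_{E^{\ot k}}\ot\wt{T}$, and hence each $\wt{T}_n$, is bounded below, whence every $V_n$ is closed.

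First I would settle invariance. The relation $\sigma(a)\wt{T}_n=\wt{T}_n(\phi^n(a)\ot I_{\mathcal H})$, immediate from covariance by induction, shows that each $V_n$ is $\sigma(\mathcal{M})$-invariant, and since the $V_n$ decrease this gives $P_{\mathcal{H}_\infty}\in\sigma(\mathcal{M})'$. For the creation operators, if $h\in\mathcal{H}_\infty$ and $\xi\in E$ then (for $n\ge2$) writing $h=\wt{T}_{n-1}(\zeta)$ gives $T(\xi)h=\wt{T}(\xi\ot h)=\wt{T}_n(\xi\ot\zeta)\in V_n$, while the case $n=1$ is immediate; thus $\wt{T}(E\ot\mathcal{H}_\infty)\subseteq\mathcal{H}_\infty$, so $\wt{T}_\infty$ is a well-defined map into $\mathcal{H}_\infty$.

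The technical heart is the identity $L(\mathcal{H}_\infty)=E\ot\mathcal{H}_\infty$. I would first record the module analogue of the Hilbert-space fact $\bigcap_n(E\ot V_n)=E\ot\mathcal{H}_\infty$: since each projection $P_{V_n}$ lies in $\sigma(\mathcal{M})'$, the operators $I_E\ot P_{V_n}$ are genuine orthogonal projections on $E\ot\mathcal{H}$ with ranges $E\ot V_n$, and as the $V_n$ decrease they converge strongly to $I_E\ot P_{\mathcal{H}_\infty}$, giving the claimed intersection. Next, for $h\in\mathcal{H}_\infty$ the inclusion $h\in V_{n+1}=\wt{T}(I_E\ot\wt{T}_n)(E^{\ot n+1}\ot\mathcal{H})$ together with injectivity of $\wt{T}$ forces its unique preimage $Lh$ to satisfy $Lh\in(I_E\ot\wt{T}_n)(E^{\ot n+1}\ot\mathcal{H})=E\ot V_n$ for every $n$, hence $Lh\in E\ot\mathcal{H}_\infty$; the reverse inclusion is $L\wt{T}=I$ applied to $\wt{T}(E\ot\mathcal{H}_\infty)\subseteq\mathcal{H}_\infty$. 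Thus $L(\mathcal{H}_\infty)=E\ot\mathcal{H}_\infty$ and, in particular, $\wt{T}(E\ot\mathcal{H}_\infty)=\mathcal{H}_\infty$, so $\wt{T}_\infty$ is onto. I expect this intersection/preimage bookkeeping, where the $C^*$-module structure rather than a plain Hilbert space must be handled, to be the main obstacle.

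It remains to see that $\mathcal{D}$ vanishes on $E\ot\mathcal{H}_\infty$. For $h\in\mathcal{H}_\infty$, Lemma~\ref{L32} gives $Uh=0$, so the computation preceding it yields $\lim_n\|L^nh\|^2=\|h\|^2$ and $(I_{E^{\ot n}}\ot P)L^nh=0$ for all $n$; feeding the latter into identity~\eqref{claim1} (read with $\wt{T}$ and $\mathcal{D}$ in place of $\wt{V}$ and $\mathcal{D}_V$) shows $\{\|L^nh\|^2\}$ is non-increasing with first term $\|h\|^2$ equal to its limit, hence constant, so every increment $\|(I_{E^{\ot n}}\ot\mathcal{D})L^{n+1}h\|^2$ vanishes; the case $n=0$ gives $\mathcal{D}Lh=0$. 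Since $L(\mathcal{H}_\infty)=E\ot\mathcal{H}_\infty$, this forces $\mathcal{D}\equiv0$ on $E\ot\mathcal{H}_\infty$, i.e. $\wt{T}_\infty^*\wt{T}_\infty=I_{E\ot\mathcal{H}_\infty}$, so $\wt{T}_\infty$ is isometric. The same relation $\mathcal{D}Lh=0$ yields $\wt{T}^*h=\wt{T}^*\wt{T}Lh=Lh\in E\ot\mathcal{H}_\infty$, and for $\xi\in E$ the vector $T(\xi)^*h$, being the contraction $\sum_i\sigma(\langle\xi,\eta_i\rangle)m_i$ of $\wt{T}^*h=\sum_i\eta_i\ot m_i$ with $m_i\in\mathcal{H}_\infty$, again lies in $\mathcal{H}_\infty$ by $\sigma(\mathcal{M})$-invariance; hence $T(\xi)^*\mathcal{H}_\infty\subseteq\mathcal{H}_\infty$, which together with the invariance already proved shows $\mathcal{H}_\infty$ is $(\sigma,T)$-reducing. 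Finally $\wt{T}_\infty$ is a surjective isometry, hence unitary, so $\wt{T}_\infty\wt{T}_\infty^*=I_{\mathcal{H}_\infty}$ and $(\sigma,T)|_{\mathcal{H}_\infty}$ is isometric and fully co-isometric, as required.
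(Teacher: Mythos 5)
Your argument is correct and follows essentially the same route as the paper: it rests on the same ingredients (Lemma \ref{L1}, Lemma \ref{L32}, and the identities \eqref{claim1}--\eqref{claim2}), deriving the constancy of $\|L^nh\|$ for $h\in\mathcal{H}_\infty$ to force the defect operator to vanish on $E\otimes\mathcal{H}_\infty$, and then using surjectivity of $\wt{T}$ onto $\mathcal{H}_\infty$ to get the reducing and fully co-isometric conclusions. The only difference is that you explicitly verify the invariance $\wt{T}(E\otimes\mathcal{H}_\infty)\subseteq\mathcal{H}_\infty$ and the identity $L(\mathcal{H}_\infty)=E\otimes\mathcal{H}_\infty$ via $\bigcap_n(E\otimes V_n)=E\otimes\mathcal{H}_\infty$, steps the paper uses implicitly; this is a welcome tightening rather than a change of method.
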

\begin{proof}
Let $ h \in \mathcal{H}_{\infty},$ then by Lemma \ref{L32}, $h=\wt{T}_nL^nh, n \in \mathbb{N}.$ Since  $(\sigma, T)$ is concave, by Equation \eqref{claim1} $\|L^{n+1}h\| \leq \|L^nh\| \leq \|h\|.$ Also, by Equation \eqref{claim2}, we obtain $\lim_{n \rightarrow  \infty}\|L^nh\|=\|h\|,$ therefore $\|L^nh\|=\|h\|,$ for each $ n \geq 0.$ In particular, $\|Lh\|=\|h\|,$ that is, $L$ is an isometry. For every $\xi \in E,$ $$ \|\xi \otimes h\|=\|L\wt{T}(\xi \otimes h)\|=\|\wt{T}(\xi \otimes h )\|.$$ Hence $\wt{T}|_{E \otimes \mathcal{H}_{\infty}}$ is an isometry.

Since $\wt{T}: E \otimes \mathcal{H}_{\infty} \longrightarrow \mathcal{H}_{\infty}$ is onto, there exists $\kappa \in E \otimes \mathcal{H}_{\infty}$ such that $\wt{T}(\kappa)=h.$ Then $  \wt{T}^*(h)=\wt{T}^*\wt{T}(\kappa)=\kappa \in E \otimes \mathcal{H}_{\infty}$, and it follows that  $\mathcal{H}_{\infty}$ is a reducing subspace for $(\sigma, T).$  Hence $(\sigma, T)|_{\mathcal{H}_{\infty}}$ is an isometric and fully co-isometric covariant representation.
\end{proof}

The following result is a generalization of Shimorin's Wold-type decomposition \cite[Theorem 3.6]{S01} and Theorem \ref {ThmW}:

\begin{theorem}\label{MT1}
Let $(\sigma, T)$ be a completely bounded, covariant representation of $E$ on  $\mathcal{H},$ which satisfies any one of the following conditions:
\begin{enumerate}
\item[(1)]  $(\sigma, T)$ is concave, that is, \\ $\| \wt{T}_2(\eta \otimes h)\|^2+\|\eta \otimes h\|^2 \leq 2 \|(I_E \otimes  \wt{T})(\eta \otimes h)\|^2, \eta \in E^{\otimes 2}, h \in \mathcal{H};$
\item[(2)]  for any  $\zeta \in E^{\otimes 2} \otimes \mathcal{H},\kappa \in E \otimes \mathcal{H} $
\begin{align}\label{Eq12}
\|(I_{E} \otimes \wt{T})(\zeta)+\kappa\|^2 \leq 2(\|\zeta\|^2+\|\wt{T}(\kappa)\|^2).
\end{align}
\end{enumerate}
Then $(\sigma, T)$ admits Wold-type decomposition. In particular, if $(\sigma, T)$ is analytic, then $\mathcal{W}=\mathcal{H} \ominus \wt{T}(E \otimes \mathcal{H})$ is the generating wandering subspace for $(\sigma, T),$ that is, $\mathcal{H}=\bigvee_{n \in \mathbb{N}_0}\mathfrak{L}_n(\mathcal{W}).$
\end{theorem}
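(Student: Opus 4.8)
The plan is to treat the two hypotheses separately, handling the concave case (1) directly with the machinery of Section \ref{Sec3} and reducing the Shimorin-type case (2) to it by passing to the Cauchy dual. First I note that in either case $\wt{T}$ is left invertible: under (1) this is Lemma \ref{L1}(1), while under (2) it follows by setting $\zeta=0$ in Inequality \eqref{Eq12}, which gives $\|\kappa\|^2\le 2\|\wt{T}(\kappa)\|^2$. Hence $\wt{T}^*\wt{T}$ is invertible and the Cauchy dual $(\sigma,T')$ together with the operators $L$, $L^n$ of Section \ref{Sec3} are available.

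For the concave case (1): by Theorem \ref{TC}, $\mathcal{H}_\infty$ is $(\sigma,T)$-reducing and $(\sigma,T)|_{\mathcal{H}_\infty}$ is isometric and fully co-isometric, which already supplies the second summand of the decomposition together with the required structure on it. It remains to identify $\mathcal{H}_\infty^{\bot}$ with $\bigvee_{n\ge 0}\mathfrak{L}_n(\mathcal{W})$. Since $(\sigma,T)|_{\mathcal{H}_\infty}$ is fully co-isometric, $\wt{T}(E\otimes\mathcal{H}_\infty)=\mathcal{H}_\infty$, so $\mathcal{H}_\infty\subseteq\wt{T}(E\otimes\mathcal{H})$ and therefore $\mathcal{W}=\mathcal{H}\ominus\wt{T}(E\otimes\mathcal{H})\subseteq\mathcal{H}_\infty^{\bot}$. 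As $\mathcal{H}_\infty^{\bot}$ is reducing, hence invariant, it contains every $\mathfrak{L}_n(\mathcal{W})$, so $\bigvee_{n\ge0}\mathfrak{L}_n(\mathcal{W})\subseteq\mathcal{H}_\infty^{\bot}$. For the reverse inclusion I consider the restriction $(\sigma,T)|_{\mathcal{H}_\infty^{\bot}}$, which inherits concavity (Inequality \eqref{W1} restricts to the reducing subspace) and whose wandering subspace is again $\mathcal{W}$ (using $\wt{T}(E\otimes\mathcal{H})=\mathcal{H}_\infty\oplus\wt{T}(E\otimes\mathcal{H}_\infty^{\bot})$). The key observation is that this restriction is analytic: since $\mathcal{H}_\infty^{\bot}$ is reducing, $\bigcap_{n}\wt{T}_n(E^{\otimes n}\otimes\mathcal{H}_\infty^{\bot})$ lies both in $\mathcal{H}_\infty^{\bot}$ and in $\bigcap_n\wt{T}_n(E^{\otimes n}\otimes\mathcal{H})=\mathcal{H}_\infty$, hence equals $\{0\}$. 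Theorem \ref{T1} applied to $(\sigma,T)|_{\mathcal{H}_\infty^{\bot}}$ then gives $\mathcal{H}_\infty^{\bot}=\bigvee_{n\ge0}\mathfrak{L}_n(\mathcal{W})$, completing the orthogonal decomposition in case (1).

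For the Shimorin-type case (2) the strategy is to reduce to case (1) via the Cauchy dual. The principal obstacle is to show that Inequality \eqref{Eq12} for $(\sigma,T)$ is equivalent to concavity of its Cauchy dual $(\sigma,T')$; this is the $C^*$-correspondence analogue of the duality between Shimorin's two conditions, and I expect it to require a somewhat delicate manipulation of the operator inequalities, using $\wt{T'}=\wt{T}(\wt{T}^*\wt{T})^{-1}$, the resulting identity $\wt{T'}^*\wt{T'}=(\wt{T}^*\wt{T})^{-1}$, and the intertwining relation $\wt{T}^*\wt{T}(\phi(a)\otimes I_{\mathcal H})=(\phi(a)\otimes I_{\mathcal H})\wt{T}^*\wt{T}$. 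Granting this, $(\sigma,T')$ is concave, so by case (1) it admits a Wold-type decomposition. Because the Cauchy dual is involutive ($(\wt{T'})'=\wt{T}$, Equation \eqref{eqn1}) and $\mathcal{W}'=\mathcal{W}$, Corollary \ref{C35} transfers the decomposition back to $(\sigma,T)$, with $\mathcal{H}_\infty=\mathcal{H}'_\infty$ and $\bigvee_{n\ge0}\mathfrak{L}_n(\mathcal{W})=\bigvee_{n\ge0}\mathfrak{L}_n'(\mathcal{W})$.

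Finally, the ``in particular'' assertion is immediate: if $(\sigma,T)$ is analytic then $\mathcal{H}_\infty=\bigcap_{n\ge1}\wt{T}_n(E^{\otimes n}\otimes\mathcal{H})=\{0\}$, so the decomposition collapses to $\mathcal{H}=\bigvee_{n\ge0}\mathfrak{L}_n(\mathcal{W})$ with $\mathcal{W}=\mathcal{H}\ominus\wt{T}(E\otimes\mathcal{H})=\ker\wt{T}^*$, which is precisely the generating wandering subspace property. Thus the only genuinely technical ingredient is the Cauchy-dual duality in case (2); everything else is an assembly of the already established Theorems \ref{TC} and \ref{T1} and Corollary \ref{C35}.
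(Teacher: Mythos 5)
Your overall architecture coincides with the paper's: case (1) is handled by combining Theorem \ref{TC} (for $\mathcal{H}_\infty$) with Theorem \ref{T1} applied to the analytic restriction to $\mathcal{H}\ominus\mathcal{H}_\infty$, and case (2) is reduced to case (1) by showing the Cauchy dual is concave and invoking Corollary \ref{C35}. Your treatment of case (1) is complete and, if anything, slightly more careful than the paper's about why $\mathcal{W}$ is the wandering subspace of the restriction and why that restriction is analytic.

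The genuine gap is in case (2): you identify the crucial step --- that Inequality \eqref{Eq12} for $(\sigma,T)$ forces concavity of $(\sigma,T')$ --- but you do not prove it; ``I expect it to require a somewhat delicate manipulation \dots Granting this'' leaves out precisely the technical heart of this half of the theorem. The argument you are missing runs as follows. After taking $\zeta=0$ to get left invertibility, substitute $\kappa=(\wt{T}^*\wt{T})^{-1/2}\xi$ into \eqref{Eq12} and note that $\|\wt{T}(\kappa)\|^2=\|\xi\|^2$; this says the row operator
\begin{align*}
X(\xi_1,\xi_2):=(I_{E}\otimes\wt{T})\xi_1+(\wt{T}^*\wt{T})^{-1/2}\xi_2
\end{align*}
satisfies $\|X\|\le\sqrt{2}$, hence $XX^*\le 2I_{E\otimes\mathcal{H}}$, which is the operator inequality
\begin{align*}
I_{E}\otimes\wt{T}\wt{T}^*+(\wt{T}^*\wt{T})^{-1}\le 2I_{E\otimes\mathcal{H}}.
\end{align*}
Now substitute $\wt{T}=\wt{T'}(\wt{T'}^*\wt{T'})^{-1}$ and $(\wt{T}^*\wt{T})^{-1}=\wt{T'}^*\wt{T'}$, and multiply on the left by $I_{E}\otimes\wt{T'}^*$ and on the right by $I_{E}\otimes\wt{T'}$; this yields
\begin{align*}
\wt{T'}^*_2\wt{T'}_2+I_{E^{\otimes 2}\otimes\mathcal{H}}\le 2\bigl(I_{E}\otimes\wt{T'}^*\wt{T'}\bigr),
\end{align*}
which is exactly concavity of $(\sigma,T')$. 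With this computation inserted, your reduction via Corollary \ref{C35} goes through and the proof is complete; without it, the proposal establishes only case (1).
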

\begin{proof}
First assume that $(\sigma, T)$ is concave. By Theorem \ref{TC}, $(\sigma, T)|_{\mathcal{H}_{\infty}}$ is an isometric and fully co-isometric covariant representation.  It remains to prove  the relation $\mathcal{H} \ominus \mathcal{H}_{\infty}=\bigvee_{n \geq 0}\mathfrak{L}_n(\mathcal{W}).$ Since $\mathcal{H} \ominus \mathcal{H}_{\infty}$ is reducing for $(\sigma, T)$, then it is easy to check that  $(\sigma, T)|_{\mathcal{H} \ominus\mathcal{H}_{\infty}}$ is analytic, and therefore $\mathcal{W}=\mbox{ker}(\wt{T}|_{E \otimes (\mathcal{H} \ominus \mathcal{H}_{\infty})})^*.$
 Now apply Theorem \ref{T1} for the $(\sigma, T)$-invariant subspace $\mathcal{H} \ominus \mathcal{H}_{\infty},$  then $\mathcal{W}$ is a generating wandering subspace for $(\sigma, T)|_{\mathcal{H} \ominus \mathcal{H}_{\infty}},$ and from the uniqueness of the wandering subspaces we get the desired relation.

Next, assume that $(\sigma, T)$ satisfies the condition $(2)$ of the statement. Taking $\zeta=0 $ in the Inequality \eqref{Eq12},  we get $\wt{T}$ is left invertible. Let $\kappa \in E \otimes \mathcal{H} .$ Then   $\kappa=(\wt{T}^*\wt{T})^{-1}\xi$ for some $ \xi \in E \otimes \mathcal{H}$, and substituting $\kappa$ in the same inequality gives
$$\|(I_{E} \otimes \wt{T})(\zeta)+(\wt{T}^*\wt{T})^{-1/2}\xi\|^2 \leq 2(\|\zeta\|^2+\|\xi\|^2). $$
Define the  operator $X: (E^{\otimes 2} \otimes \mathcal{H}) \oplus (E \otimes \mathcal{H}) \longrightarrow E \otimes \mathcal{H}$ by
$$X(\xi_1,\xi_2):=(I_{E} \otimes \wt{T})\xi_1 +(\wt{T}^*\wt{T})^{-1/2}\xi_2.$$ Then from the previous inequality, $\|X\|\leq \sqrt{2},$ which gives $XX^* \leq 2 I_{E \otimes \mathcal{H}},$ and we obtain  $$I_E \otimes \wt{T}\wt{T}^*+(\wt{T}^*\wt{T})^{-1} \leq 2I_{E \otimes \mathcal{H}}.$$
 Now, substitute $\wt{T}=\wt{T'}(\wt{T'}^*\wt{T'})^{-1}$ in the previous inequality and use Equation \eqref{eqn1}, we have
$$I_{E} \otimes \wt{T'}(\wt{T'}^*\wt{T'})^{-2}\wt{T'}^*+(\wt{T'}^*\wt{T'}) \leq 2I_{E \otimes \mathcal{H}}.$$
Multiply this inequality by $I_{E} \otimes \wt{T'}^*$ on the left side and by $I_E \otimes \wt{T'}$ on the right, we obtain $$I_{E^{\otimes 2} \otimes \mathcal{H}}+\wt{T'}^*_2\wt{T'}_2\leq 2 (I_{E} \otimes \wt{T'}^*\wt{T'}) .$$  This shows that the Cauchy dual $(\sigma, T')$ of $(\sigma, T)$ is concave. Therefore  $(\sigma, T')$ admits Wold-type decomposition. Hence, using Corollary \ref{C35}, we get the desired conclusion.
\end{proof}
\begin{corollary}
Let $(\sigma, T)$ be an analytic completely bounded, covariant representation of $E$ on a Hilbert space $\mathcal{H}$ which satisfies the Inequality  \eqref{Eq12}. Then $(\sigma, T)$  has the generating  wandering subspace property.
\end{corollary}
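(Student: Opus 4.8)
The plan is to obtain this corollary directly from Theorem \ref{MT1}. First I would observe that the hypothesis that $(\sigma, T)$ satisfies Inequality \eqref{Eq12} is literally condition $(2)$ in the statement of Theorem \ref{MT1}. Hence that theorem applies and guarantees that $(\sigma, T)$ admits a Wold-type decomposition; that is, there is a wandering subspace $\mathcal{W} = \ker \wt{T}^*$ realizing
\[
\mathcal{H} = \Big(\bigvee_{n \geq 0} \mathfrak{L}_n(\mathcal{W})\Big) \bigoplus \mathcal{H}_\infty,
\qquad \mathcal{H}_\infty = \bigcap_{n \geq 1} \wt{T}_n(E^{\otimes n} \otimes \mathcal{H}),
\]
with $(\sigma, T)|_{\mathcal{H}_\infty}$ isometric and fully co-isometric.

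Next I would feed in the analyticity hypothesis. By definition, $(\sigma, T)$ being analytic means precisely that $\bigcap_{n \geq 1} \wt{T}_n(E^{\otimes n} \otimes \mathcal{H}) = \{0\}$, i.e. $\mathcal{H}_\infty = \{0\}$. Substituting this into the Wold-type decomposition collapses the second orthogonal summand, so the decomposition reduces to
\[
\mathcal{H} = \bigvee_{n \in \mathbb{N}_0} \mathfrak{L}_n(\mathcal{W}),
\qquad \mathcal{W} = \ker \wt{T}^* = \mathcal{H} \ominus \wt{T}(E \otimes \mathcal{H}),
\]
which is exactly the generating wandering subspace property.

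Since Theorem \ref{MT1} does all of the real work --- in particular the passage to the Cauchy dual $(\sigma, T')$, the verification that $(\sigma, T')$ is concave via the operator manipulation of Inequality \eqref{Eq12}, and the transfer back through Corollary \ref{C35} --- there is no genuine obstacle remaining at the level of the corollary. Indeed, the statement is simply the specialization of the ``in particular'' clause of Theorem \ref{MT1} to a representation that already satisfies condition $(2)$, and the only thing left to check is the trivial bookkeeping that analyticity forces $\mathcal{H}_\infty$ to vanish.
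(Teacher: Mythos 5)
Your proposal is correct and is essentially the paper's own (implicit) argument: the corollary is stated without proof precisely because it is the specialization of the ``in particular'' clause of Theorem \ref{MT1} to a representation satisfying condition (2), i.e.\ Inequality \eqref{Eq12}, with analyticity forcing $\mathcal{H}_{\infty}=\{0\}$. Nothing further is needed.
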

\begin{corollary}
Let $(\sigma, T)$ be a completely bounded, covariant representation of $E$ on  $\mathcal{H}$ which satisfies
\begin{align}\label{Eq13}
\|\wt{T}(\xi)\|^2+\|\wt{T}^*_2\wt{T}(\xi)\|^2 \leq 2\|\wt{T}^*\wt{T}(\xi )\|^2, \:  \xi \in E \otimes \mathcal{H},
 \end{align} and let $\wt{T}$ be bounded below. Then $(\sigma, T)$ admits Wold-type decomposition. In particular, if $(\sigma, T)$ is analytic, then $(\sigma, T)$  has  the generating wandering subspace property.
\end{corollary}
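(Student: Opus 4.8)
The plan is to reduce Inequality \eqref{Eq13} to the operator inequality that already appears inside the proof of Theorem \ref{MT1}(2), and then to invoke the remainder of that proof together with Corollary \ref{C35}. Since $\wt{T}$ is assumed bounded below, $A:=\wt{T}^*\wt{T}$ is a positive invertible operator on $E\otimes\mathcal{H}$, so the Cauchy dual $(\sigma,T')$ is well defined; this is the object through which the whole argument runs.

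First I would recast \eqref{Eq13} as a single operator inequality on $E\otimes\mathcal{H}$. From \eqref{eqnn} we have $\wt{T}_2=\wt{T}(I_E\otimes\wt{T})$, whence $\wt{T}_2\wt{T}_2^*=\wt{T}(I_E\otimes\wt{T}\wt{T}^*)\wt{T}^*$. Consequently, for $\xi\in E\otimes\mathcal{H}$, the three norm-squares in \eqref{Eq13} equal $\langle A\xi,\xi\rangle$, $\langle A(I_E\otimes\wt{T}\wt{T}^*)A\,\xi,\xi\rangle$ and $\langle A^2\xi,\xi\rangle$ respectively, so \eqref{Eq13} is equivalent to the operator inequality
\[
A+A(I_E\otimes\wt{T}\wt{T}^*)A\leq 2A^2 .
\]

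Next I would conjugate this inequality by the self-adjoint invertible operator $A^{-1}$. Since conjugation by $A^{-1}$ is order preserving and $A^{-1}AA^{-1}=A^{-1}$, $A^{-1}A(I_E\otimes\wt{T}\wt{T}^*)AA^{-1}=I_E\otimes\wt{T}\wt{T}^*$ and $A^{-1}(2A^2)A^{-1}=2I$, this gives exactly
\[
(\wt{T}^*\wt{T})^{-1}+I_E\otimes\wt{T}\wt{T}^*\leq 2I_{E\otimes\mathcal{H}},
\]
which is precisely the operator inequality reached midway through the proof of Theorem \ref{MT1}(2). From here the argument there applies unchanged: substituting $\wt{T}=\wt{T'}(\wt{T'}^*\wt{T'})^{-1}$ via Equation \eqref{eqn1} and then multiplying on the left by $I_E\otimes\wt{T'}^*$ and on the right by $I_E\otimes\wt{T'}$ yields $I_{E^{\otimes 2}\otimes\mathcal{H}}+\wt{T'}^*_2\wt{T'}_2\leq 2(I_E\otimes\wt{T'}^*\wt{T'})$, i.e. the Cauchy dual $(\sigma,T')$ is concave. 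By Theorem \ref{MT1}(1) the representation $(\sigma,T')$ admits a Wold-type decomposition, and by Corollary \ref{C35} so does $(\sigma,T)$. For the last assertion, if $(\sigma,T)$ is analytic then $\mathcal{H}_\infty=\bigcap_{n\geq1}\wt{T}_n(E^{\otimes n}\otimes\mathcal{H})=\{0\}$, so the decomposition collapses to $\mathcal{H}=\bigvee_{n\geq0}\mathfrak{L}_n(\mathcal{W})$, which is the generating wandering subspace property.

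I expect the only genuine obstacle to be the bookkeeping in the first two steps, namely the expansion $\wt{T}_2\wt{T}_2^*=\wt{T}(I_E\otimes\wt{T}\wt{T}^*)\wt{T}^*$ and the verification that conjugation by $A^{-1}$ turns \eqref{Eq13} into the target inequality. Once $(\wt{T}^*\wt{T})^{-1}+I_E\otimes\wt{T}\wt{T}^*\leq 2I_{E\otimes\mathcal{H}}$ is secured there is no further analytic content, the conclusion being a direct appeal to Theorem \ref{MT1} and Corollary \ref{C35}.
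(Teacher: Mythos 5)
Your proof is correct and takes essentially the same route as the paper: both reduce \eqref{Eq13} to the operator inequality $I_E\otimes\wt{T}\wt{T}^*+(\wt{T}^*\wt{T})^{-1}\leq 2I_{E\otimes\mathcal H}$ (your conjugation by $(\wt{T}^*\wt{T})^{-1}$ is exactly the paper's substitution $\xi=(\wt{T}^*\wt{T})^{-1}\eta$). The only difference is bookkeeping at the end: the paper converts that inequality back into condition \eqref{Eq12} and cites Theorem \ref{MT1}(2) as a black box, whereas you inline the body of that proof (concavity of the Cauchy dual followed by Corollary \ref{C35}); both are valid.
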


\begin{proof}
We have that $$\langle \wt{T}^*\wt{T}(\xi), \xi  \rangle+\langle \wt{T}^*\wt{T}_2\wt{T}_2^*\wt{T}^*(\xi), \xi  \rangle \leq 2\langle \wt{T}^*\wt{T}(\xi ), \wt{T}^*\wt{T}(\xi )\rangle,$$ where $\xi \in E \otimes \mathcal{H}.$ Since $\wt{T}$ is left invertible,  $\xi =(\wt{T}^*\wt{T})^{-1}(\eta)$ for some $\eta \in E \otimes \mathcal{H},$ substituting  it  in  the above  inequality, we get
 $$\langle (\wt{T}^*\wt{T})^{-1}(\eta), \eta \rangle+\langle (I_E \otimes \wt{T}^*\wt{T})(\eta), \eta \rangle \leq 2 \langle \eta, \eta \rangle, $$ that is, \begin{align} \label{Eq14}
 I_E \otimes \wt{T}\wt{T}^*+(\wt{T}^*\wt{T})^{-1} \leq 2I_{E \otimes \mathcal{H}}.
 \end{align}
 In fact,  Equation (\ref{Eq14}) is equivalent to Equation (\ref{Eq13}).
 
Define the  operator $X: (E^{\otimes 2} \otimes \mathcal{H}) \oplus (E \otimes \mathcal{H}) \longrightarrow E \otimes \mathcal{H}$  as in the  previous theorem. Inequality  \eqref{Eq14} yields   $$\|(I_{E} \otimes \wt{T})(\xi_1)+(\wt{T}^*\wt{T})^{-1/2}\xi_2\|^2=\|X(\xi_1, \xi_2)\|^2 \leq 2(\|\xi_1\|^2+\|\xi_2\|^2). $$
Further, substituting $\eta=(\wt{T}^*\wt{T})^{-1/2}\xi_2$ we have
$$\|(I_{E} \otimes \wt{T})(\xi_1)+\eta\|^2 \leq 2(\|\xi_1\|^2+\|\wt{T}(\eta)\|^2).$$ This shows that Inequality \eqref{Eq14} is equivalent to Inequality \eqref{Eq12}. Hence from Theorem \ref{MT1} $(\sigma, T)$ admits Wold-type decomposition.
\end{proof}

\section{Generating wandering subspaces for doubly commuting covariant representations of product systems}\label{Sec4}
Wandering subspaces for covariant representations of subproduct systems of $C^*$-correspondences is studied in \cite{SHV16}. In this section we recall several notions related to product systems of $C^*$-correspondences from (see \cite{F02,S06, S08, SZ08}) and explore wandering subspaces for doubly commuting covariant representations of product systems. 

\begin{definition}
\begin{itemize}
\item[(1)]  Let $k\in \mathbb N.$ A {\it product system} (cf. \cite{F02})
$\be$ is defined as a family of $C^*$-correspondences $\{E_1, \ldots,E_k\},$ along with the unitary
isomorphisms $t_{i,j}: E_i \ot E_j \to E_j \ot E_i$ ($i>j$). Using these isomorphisms, for all
${\bf n}=(n_1, \cdots, n_k) \in \Nk$ the correspondence $\be ({\bf n})$ is identified with $E_1^{\ot^{ n_1}} \ot \cdots \ot E_k^{\ot^{n_k}}.$ We define maps $t_{i,i} := \id_{E_i \ot E_i}$ and $t_{i,j} := t_{j,i}^{-1}$ when $i<j.$

\item[(2)]  Assume $\be$ to be a product system over $\Nk$. A {\it completely bounded, covariant representation} (cf. \cite{S08}) of $\be$ on a
	Hilbert space $\mathcal H$ is defined as a tuple $(\sigma, T^{(1)}, \ldots, T^{(k)})$, where $\sigma$ is a
	representation of the $C^*$-algebra $\mathcal M$ on $\mathcal H$,  $T^{(i)}:E_i \to B(\mathcal H)$ are  completely bounded linear  maps satisfying
	\[ T^{(i)}(a \xi_i b) = \sigma(a) T^{(i)}(\xi_i) \sigma(b), \;\;\; a,b \in \mathcal M, \xi_i \in E_i,\]
	and satisfy the commutation relation
	\begin{equation} \label{rep} \wT^{(i)} (I_{E_i} \ot \wT^{(j)}) = \wT^{(j)} (I_{E_j} \ot \wT^{(i)}) (t_{i,j} \ot I_{\mathcal H})\end{equation}
	with $1\leq i,j\leq k$.
	%(above we write $\wT^{(i)}$ for $\wt{T_i}$).
	Moreover, the completely bounded, covariant representation $(\sigma, T^{(1)}, \ldots, T^{(k)})$ is called {\it isometric} if each $(\sigma, T^{(i)})$ is isometric as a covariant representation of the $C^*$-correspondence $E_i$, and similarly the notion of {\it fully co-isometric} is defined.
\end{itemize}
\end{definition}

For each $1\leq i \leq k$
and $l \in \bn$ define $\wT^{(i)}_l: E_i^{\ot l}\otimes \mathcal H\to \mathcal H$ by
\[ \wT^{(i)}_l (\xi_1 \ot \cdots \ot \xi_l \ot h) := T^{(i)} (\xi_1) \cdots T^{(i)}(\xi_l) h\]
 where $\xi_1, \ldots, \xi_l \in E_{i}, h \in \mathcal H$. Then
\begin{equation}\label{eqnn}\wT^{(i)}_l=\wT^{(i)}(I_{E_i} \otimes \wT^{(i)}) \cdots (I_{E^{\otimes l-1}_i} \otimes  \wT^{(i)}).\end{equation}
Similarly for $\mathbf{n}=(n_1, \cdots, n_k) \in \mathbb{N}_0^k $, we use notation $\wT_{\mathbf{n}}:\mathbb{E}(\mathbf{n})\otimes \mathcal{H} \longrightarrow \mathcal{H}$ for
 $$\wT_{\mathbf{n}}:=\wT^{(1)}_{n_1}\left(I_{E_1^{\otimes n_1}} \otimes\wT^{(2)}_{n_2}\right) \cdots \left(I_{E_1^{\otimes n_1} \otimes \cdots \otimes E_{k-1}^{\otimes {n_{k-1}}}} \otimes\wT^{(k)}_{n_k}\right).$$
 Let us define the linear map $T_{\mathbf{n}}: \mathbb{E}(\mathbf{n}) \longrightarrow B(\mathcal{H})$  (cf. \cite{S08}) by $$T_{\mathbf{n}}(\xi)h:=\wT_{\mathbf{n}}(\xi \otimes h), ~ \xi \in \mathbb{E}(\mathbf{n}), h \in \mathcal{H}.$$
We use  $I_k$ for $\{1,2, \dots ,k\}.$ Let $\alpha=\{\alpha_1, \dots ,\alpha_n \} \subseteq I_k$,   define $$\mathbb{N}_0^{\alpha}:=\{\mathbf{m}=(m_{\alpha_1}, \cdots ,m_{\alpha_n})\: : \: m_{\alpha_j} \in \mathbb{N}_0, \:1 \leq j \leq n\}.$$ 
For each $\mathbf{m}=(m_{\alpha_1}, \cdots ,m_{\alpha_n})\in \mathbb{N}_0^{\alpha}$, the map $\wT_{\mathbf{m}}^{\alpha}:\mathbb{E}(\mathbf{m})\otimes \mathcal{H} \longrightarrow \mathcal{H}$ is defined by
 $$\wT_{\mathbf{m}}^{\alpha}:=\wT^{(\alpha_1)}_{m_{\alpha_1}}\left(I_{E_{\alpha_1}^{\otimes m_{\alpha_1}}} \otimes\wT^{(\alpha_2)}_{m_{\alpha_2}}\right) \cdots \left(I_{E_{\alpha_1}^{\otimes m_{\alpha_1}} \otimes \cdots \otimes E_{\alpha_{n-1}}^{\otimes {m_{\alpha_{n-1}}}}} \otimes\wT^{(\alpha_n)}_{m_{\alpha_n}}\right).$$

\begin{definition}	
	Let  $\mathcal K$ be a closed subspace of a Hilbert space $ \mathcal H.$  The subspace $\mathcal K$ is called {\it invariant}({ resp. \it reducing}) (cf. \cite{SZ08}) for a covariant representation $(\sigma, T^{(1)}, \ldots, T^{(k)})$ on $\mathcal H,$ if   $ \mathcal{K}$ is $(\sigma, T^{(i)})$-invariant(resp.  $(\sigma, T^{(i)})$-reducing) subspace for  $1 \leq i \leq k.$  Then it is evident that the natural `restriction' of this representation to $\mathcal K$ provides a new representation
	of $\be$ on $\mathcal K$, which is called a {\it summand} of $(\sigma, T^{(1)}, \ldots, T^{(k)})$ and will be
	denoted by $(\sigma, T^{(1)}, \ldots, T^{(k)})|_{\mathcal K}$.	
	\end{definition}

 Moreover, for  a closed   $\sigma(\mathcal{M})$-invariant  subspace  $\mathcal K,$ we use symbol $$\mathfrak{L}_{\mathbf{m}}^{\alpha}(\mathcal{K}):=\bigvee \{T_{m_{\alpha_1}}^{(\alpha_1)}(\eta_{\alpha_1}) \cdots T_{m_{\alpha_n}}^{(\alpha_p)}(\eta_{\alpha_n})h\: : \: \:\: \eta_{\alpha_j} \in E_{\alpha_j}^{\otimes m_{\alpha_j}}, 1 \leq j \leq n , h \in \mathcal{K}\}.$$ Clearly $\mathfrak{L}_{\mathbf{m}}^{\alpha}(\mathcal{K})=\overline{\wT_{\mathbf{m}}^{\alpha}(\mathbb{E}(\mathbf{m}) \otimes \mathcal{K})}.$ We use $[\mathcal K]_{T_{\alpha}}$ for the smallest closed \\$(\sigma, T^{(\alpha_1)}, \dots, T^{(\alpha_n)})$-invariant subspace of $\mathcal{H}$ containing $\mathcal{K},$ that is,
 \begin{align}
 [\mathcal{K}]_{T_{\alpha}}:=\bigvee_{\mathbf{m} \in \mathbb{N}_0^{\alpha}}\mathfrak{L}_{\mathbf{m}}^{\alpha}(\mathcal{K}).
 \end{align}

Further, we simply write $[\mathcal K]_{T^{(i)}}$  if $\alpha$ is a singleton set $\{i\}.$
\begin{definition}\begin{enumerate}\item 
 We say that the $\sigma(\mathcal{M})$-invariant closed subspace $\mathcal{K}$ is said to be {\it wandering} for  the covariant representation $(\sigma, T^{(\alpha_1)} ,\dots, T^{(\alpha_n)})$ if 
$$\mathcal{K}\perp\mathfrak{L}_{\mathbf{m}}^{\alpha}(\mathcal{K})~\mbox{ for each }~\mathbf{m} \in \mathbb{N}_0^{\alpha} \setminus\{0\}.$$
\item
The covariant representation $(\sigma, T^{(\alpha_1)} ,\dots, T^{(\alpha_n)})$  is said to have the {\it generating wandering subspace  property} if there exists a wandering subspace $\mathcal{K} \subseteq \mathcal{H}$ for $(\sigma, T^{(\alpha_1)} ,\dots, T^{(\alpha_n)})$ such that $[\mathcal{K}]_{T_{\alpha}}=\mathcal{H},$ that is, $$\mathcal{H}=\bigvee_{\mathbf{m} \in \mathbb{N}_0^{\alpha}}\mathfrak{L}_{\mathbf{m}}^{\alpha}(\mathcal{K}).$$
\end{enumerate}
\end{definition}

\begin{notation}
Let $\alpha=\{\alpha_1, \dots ,\alpha_n \}$ be a non-empty subset of $I_k,$ define the closed subspace $\mathcal{W}_{\alpha}$ of $\mathcal{H}$ by
\begin{align}
\mathcal{W}_{\alpha}:=\bigcap_{i=1}^n (\mathcal{H} \ominus \wt{T}^{(\alpha_i)}(E_i \otimes \mathcal{H})).
\end{align}
Again, if $\alpha=\{i\}$ we simply write $\mathcal{W}_i:=\mathcal{H} \ominus \wt{T}^{(i)}(E_i \otimes \mathcal{H}).$ Therefore $$\mathcal{W}_{\alpha}=\bigcap_{\alpha_i\in\alpha}\mathcal{W}_{\alpha_i}.$$
\end{notation}

\begin{definition}  \label{dcom}
	A  completely bounded, covariant representation $(\sigma, T^{(1)}, \ldots,$ $ T^{(k)})$ of  $\be$ on a Hilbert space $\mathcal H$ is said to be {\it doubly
	commuting}  (cf. \cite{S08}) if for each distinct $i,j \in \{1,\ldots,k\}$ we have
	\begin{equation}\label{doubly}\wT^{(j)^*} \wT^{(i)} =
	(I_{E_j} \ot \wT^{(i)})  (t_{i,j} \ot I_{\mathcal H})  (I_{E_i} \ot \wT^{(j)^*}).
	\end{equation}
\end{definition}

For distinct $i, j \in \onek$, a simple calculation (cf. \cite[p. 460]{SZ08}) using Equation \eqref{doubly} yields 
\begin{align}\label{eqn*}
\wt{T}^{(i)^*}\wt{T}^{(i)}\wt{T}^{(j)^*}\wt{T}^{(j)} =
\wt{T}^{(j)^*}\wt{T}^{(j)}\wt{T}^{(i)^*}\wt{T}^{(i)} .
\end{align}
Thus the operators $\{I_{\mathcal H}-\wt{T}^{(i)^*}\wt{T}^{(i)}\}_{i=1}^k$ commute to each other.

The following proposition is essential in order to extend Theorem \ref{MT1} for the covariant representation $(\sigma, T^{(1)}, \dots, T^{(k)}).$
\begin{proposition}\label{P21}
Let $\mathbb{E}$ be a product system of $C^*$-correspondences over $\mathbb{N}_0^k$ and let $(\sigma, T^{(1)}, \dots, T^{(k)})$ be a doubly commuting completely bounded, covariant representation of  $\mathbb{E}$ on a Hilbert space $\mathcal{H}.$ Then for each non-empty subset $\alpha \subseteq I_{k}$, the subspace $\mathcal{W}_{\alpha}$ is $(\sigma, T^{(j)})$-reducing,  where $j \notin \alpha.$
\end{proposition}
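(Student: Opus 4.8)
The plan is to reduce the statement to the one-correspondence notion of reducibility from Section~\ref{Sec2} by treating each factor $\mathcal{W}_{\alpha_i}$ separately and then intersecting. First I would record the reformulation $\mathcal{W}_{\alpha_i}=\mbox{ker}\,\wt{T}^{(\alpha_i)^*}$, so that $\mathcal{W}_{\alpha_i}^{\perp}=\overline{\wt{T}^{(\alpha_i)}(E_{\alpha_i}\otimes\mathcal{H})}$, together with $\mathcal{W}_\alpha=\bigcap_{\alpha_i\in\alpha}\mathcal{W}_{\alpha_i}$. Since an intersection of subspaces that are each $(\sigma,T^{(j)})$-reducing is again $(\sigma,T^{(j)})$-reducing (invariance under $T^{(j)}(\xi)$ and under $T^{(j)}(\xi)^*$ both pass to intersections, and projections in $\sigma(\mathcal M)'$ are closed under meet), it suffices to fix the distinguished $j\notin\alpha$ and a single $\alpha_i\in\alpha$, and to prove that $\mathcal{W}_{\alpha_i}$ reduces $(\sigma,T^{(j)})$; here $j\neq\alpha_i$, which is exactly what lets the doubly commuting relation apply.

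For the $\sigma(\mathcal M)$-part I would take adjoints in the intertwining identity $\wt{T}^{(\alpha_i)}(\phi(a)\otimes I_{\mathcal H})=\sigma(a)\wt{T}^{(\alpha_i)}$ of Lemma~\ref{MSL} to obtain $\wt{T}^{(\alpha_i)^*}\sigma(a)=(\phi(a)\otimes I_{\mathcal H})\wt{T}^{(\alpha_i)^*}$; then $h\in\mbox{ker}\,\wt{T}^{(\alpha_i)^*}$ forces $\sigma(a)h\in\mbox{ker}\,\wt{T}^{(\alpha_i)^*}$, so $\mathcal{W}_{\alpha_i}$ is $\sigma(\mathcal M)$-invariant and, running $a\mapsto a^{*}$, reducing, i.e. $P_{\mathcal{W}_{\alpha_i}}\in\sigma(\mathcal M)'$. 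For invariance of $\mathcal{W}_{\alpha_i}$ under each $T^{(j)}(\xi)=\wt{T}^{(j)}(\xi\otimes\,\cdot\,)$ I would invoke the doubly commuting relation \eqref{doubly} with its indices specialized to the pair $(\alpha_i,j)$, namely $\wt{T}^{(\alpha_i)^*}\wt{T}^{(j)}=(I_{E_{\alpha_i}}\otimes\wt{T}^{(j)})(t_{j,\alpha_i}\otimes I_{\mathcal H})(I_{E_j}\otimes\wt{T}^{(\alpha_i)^*})$; applying this to $\xi\otimes h$ with $h\in\mathcal{W}_{\alpha_i}$ annihilates the rightmost factor, so $\wt{T}^{(\alpha_i)^*}\wt{T}^{(j)}(\xi\otimes h)=0$ and hence $T^{(j)}(\xi)h\in\mathcal{W}_{\alpha_i}$. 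For invariance of $\mathcal{W}_{\alpha_i}^{\perp}$ I would instead use the product-system commutation \eqref{rep}: for $\eta\in E_{\alpha_i}$ and $g\in\mathcal H$, the identity $\wt{T}^{(j)}(I_{E_j}\otimes\wt{T}^{(\alpha_i)})=\wt{T}^{(\alpha_i)}(I_{E_{\alpha_i}}\otimes\wt{T}^{(j)})(t_{j,\alpha_i}\otimes I_{\mathcal H})$ shows that $T^{(j)}(\xi)\wt{T}^{(\alpha_i)}(\eta\otimes g)$ again lies in the range of $\wt{T}^{(\alpha_i)}$, so by boundedness $T^{(j)}(\xi)$ carries $\overline{\wt{T}^{(\alpha_i)}(E_{\alpha_i}\otimes\mathcal H)}=\mathcal{W}_{\alpha_i}^{\perp}$ into itself. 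The two invariances together assert that $\mathcal{W}_{\alpha_i}$ reduces every $T^{(j)}(\xi)$, and intersecting over $\alpha_i\in\alpha$ yields the claim for $\mathcal{W}_\alpha$.

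The conceptual content here is modest; the only genuine work, and the place I expect the main obstacle, is the index and tensor bookkeeping. One must match the abstract relations \eqref{doubly} and \eqref{rep}, which are stated for a generic ordered pair, to the pair $(\alpha_i,j)$ in the correct order, and keep the flip isomorphisms $t_{j,\alpha_i}=t_{\alpha_i,j}^{-1}$ on the correct side so that the domains and codomains $E_j\otimes E_{\alpha_i}\otimes\mathcal H$ and $E_{\alpha_i}\otimes E_j\otimes\mathcal H$ line up; a direction slip there makes the displayed compositions ill-typed. Beyond this, closing the argument needs only the elementary facts that $\mbox{ker}\,\wt{T}^{(\alpha_i)^*}=(\mbox{ran}\,\wt{T}^{(\alpha_i)})^{\perp}$ and that reducibility is preserved under intersection, after which the conclusion for $\mathcal{W}_\alpha$ with $j\notin\alpha$ is immediate.
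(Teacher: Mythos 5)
Your proof is correct and follows essentially the same route as the paper's: invariance of each $\mathcal{W}_{\alpha_i}$ under $T^{(j)}(\xi)$ comes from the doubly commuting relation \eqref{doubly} annihilating the factor $I_{E_j}\otimes\wt{T}^{(\alpha_i)^*}$ on a kernel vector, and the complementary invariance comes from the commutation relation \eqref{rep}, after which one intersects over $\alpha_i\in\alpha$. The only (harmless) difference is that for the complement you argue directly on $\overline{\wt{T}^{(\alpha_i)}(E_{\alpha_i}\otimes\mathcal{H})}=\mathcal{W}_{\alpha_i}^{\perp}$, whereas the paper instead shows $\wt{T}^{(j)^*}\mathcal{W}_{\alpha}\subseteq E_j\otimes\mathcal{W}_{\alpha_i}$ via the identity $\mbox{ker}(I_{E_j}\otimes\wt{T}^{(\alpha_i)^*})=E_j\otimes\mbox{ker}\,\wt{T}^{(\alpha_i)^*}$ cited from Skalski; both routes establish the reducing property.
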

\begin{proof}
Let $\alpha=\{\alpha_1, \dots,\alpha_n\}$ be a non-empty subset of $I_k$ and  let $j \notin \alpha.$ Since  $\mathcal{W}_l=\mbox{ker}\widetilde{T}^{(l)^*}$ for all $1 \leq l \leq k,$ we get $\mathcal{W}_{\alpha}=\bigcap_{i=1}^{n}\mbox{ker}\widetilde{T}^{(\alpha_i)^*}.$  Let $\eta_j \in E_j, \xi_{\alpha_i} \in E_{\alpha_i}, w_{\alpha} \in \mathcal{W}_{\alpha}$ and $h \in \mathcal{H}.$ By doubly commutativity of $(\sigma, T^{(1)}, \dots, T^{(k)}),$ we obtain
\begin{align*}
&\langle \widetilde{T}^{(j)} (\eta_j \otimes w_{\alpha}), \widetilde{T}^{(\alpha_i)}(\xi_{\alpha_i} \otimes h)\rangle \\=&\langle  (I_{E_{\alpha_i}} \otimes \widetilde{T}^{(j)})(t_{j, \alpha_i} \otimes I_{\mathcal{H}})(I_{E_j} \otimes \widetilde{T}^{(\alpha_i)^*})(\eta_j \otimes w_{\alpha}), \xi_{\alpha_i} \otimes h\rangle  =0.
\end{align*}
 It follows that $\widetilde{T}^{(j)}(E_j \otimes \mathcal{W}_{\alpha}) \bot \widetilde{T}^{(\alpha_i)}(E_{\alpha_i} \otimes \mathcal{H})$ and hence $\widetilde{T}^{(j)}(E_j \otimes \mathcal{W}_{\alpha}) \subseteq \mathcal{W}_{\alpha_i}$ for all $\alpha_i \in \alpha$ and $j \notin  \alpha.$ Therefore $\mathcal{W}_{\alpha}$ is $(\sigma, T^{(j)})$-invariant.  Also,
\begin{align*}
&\langle \widetilde{T}^{(j)^*}w_{\alpha}, (I_{E_j} \otimes \widetilde{T}^{(\alpha_i)})(\eta_j \otimes \xi_{\alpha_i} \otimes h)\rangle
\\ =& \: \langle w_{\alpha},  \widetilde{T}^{(\alpha_i)}(I_{E_{\alpha_i}} \otimes \widetilde{T}^{(j)})(t_{j, \alpha_i} \otimes I_{\mathcal{H}})(\eta_j \otimes \xi_{\alpha_i} \otimes h) \rangle \\
=& \: \langle\widetilde{T}^{(\alpha_i)^*}w_{\alpha}, (I_{E_{\alpha_i}} \otimes \widetilde{T}^{(j)})(t_{j, \alpha_i} \otimes I_{\mathcal{H}})(\eta_j \otimes \xi_{\alpha_i} \otimes h) \rangle=0,
\end{align*}
for all $\alpha_i \in \alpha$ and $h \in \mathcal{H}.$  Note that $\mbox{ker}(I_E \otimes \wt{T}^{(\alpha_i)^*})=E \otimes \mbox{ker}\wt{T}^{(\alpha_i)^*}$ (see  proof of \cite[ Lemma 1.6 ]{Sk09}), then  $\widetilde{T}^{(j)^*} w_{\alpha} \in ((I_{E_j} \otimes \widetilde{T}^{(\alpha_i)})(E_j \otimes E_{\alpha_i} \otimes \mathcal{H}))^{\bot}=\mbox{ker}(I_{E_j} \otimes \widetilde{T}^{(\alpha_i)^*})=E_j \otimes \mathcal{W}_{\alpha_i}$ and then $\widetilde{T}^{(j)^*}\mathcal{W}_{\alpha} \subseteq  E_j \otimes \mathcal{W}_{\alpha_i} $ for all $\alpha_i \in \alpha.$ It follows that $\widetilde{T}^{(j)^*}\mathcal{W}_{\alpha} \subseteq  E_j \otimes \mathcal{W}_{\alpha} $ for $j \notin \alpha.$  This completes the proof.
\end{proof}
We denote the cardinality of $\alpha$ by $\# \alpha.$
\begin{theorem}\label{T22}
Let $(\sigma, T^{(1)}, \dots, T^{(k)})$ be a doubly commuting completely bounded, covariant representation of the product system $\mathbb{E}$ on a Hilbert space $\mathcal{H}$ such that for any $(\sigma, T^{(i)})$-reducing subspace $\mathcal{K}_i$, the subspace $$\mathcal{K}_i \ominus \widetilde{T}^{(i)}(E_i \otimes \mathcal{K}_i)\left(=\mbox{ker}\widetilde{T}^{(i)^*}|_{\mathcal{K}_i}\right)$$ is a generating wandering subspace for $(\sigma, T^{(i)})|_{\mathcal{K}_i}~\mbox{where}~ i=1, \ldots ,k.$ Then for each non-empty set $\alpha=\{\alpha_1, \ldots , \alpha_n\} \subseteq I_k,$ the covariant representation $(\sigma, T^{(\alpha_1)}, \dots, T^{(\alpha_n)})$ has a generating wandering subspace property. Moreover, the corresponding generating  wandering subspace is given by $$\mathcal{W}_{\alpha}=\bigcap_{i=1}^{n}(\mathcal{H} \ominus \widetilde{T}^{(\alpha_i)}(E_{\alpha_i} \otimes \mathcal{H})).$$
\end{theorem}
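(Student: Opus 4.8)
The plan is to induct on the cardinality $n=\#\alpha$. For $n=1$, say $\alpha=\{i\}$, the whole space $\mathcal H$ is $(\sigma,T^{(i)})$-reducing, so the standing hypothesis applied with $\mathcal K_i=\mathcal H$ gives at once that $\mathcal W_i=\mathcal H\ominus\widetilde T^{(i)}(E_i\otimes\mathcal H)=\mbox{ker}\,\widetilde T^{(i)^*}$ is a generating wandering subspace for $(\sigma,T^{(i)})$; this is the base case.

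For the inductive step I would write $\alpha=\beta\cup\{\gamma\}$ with $\gamma:=\alpha_n$ and $\beta:=\{\alpha_1,\ldots,\alpha_{n-1}\}$, so that $\#\beta=n-1$. By the inductive hypothesis $(\sigma,T^{(\alpha_1)},\ldots,T^{(\alpha_{n-1})})$ has the generating wandering subspace property with wandering subspace $\mathcal W_\beta$, that is $\mathcal H=[\mathcal W_\beta]_{T_\beta}$. Since $\gamma\notin\beta$, Proposition \ref{P21} shows that $\mathcal W_\beta$ is $(\sigma,T^{(\gamma)})$-reducing, so I may apply the standing hypothesis with $i=\gamma$ and $\mathcal K_\gamma=\mathcal W_\beta$: the subspace $\mbox{ker}\,\widetilde T^{(\gamma)^*}|_{\mathcal W_\beta}=\mathcal W_\beta\ominus\widetilde T^{(\gamma)}(E_\gamma\otimes\mathcal W_\beta)$ is generating wandering for $(\sigma,T^{(\gamma)})|_{\mathcal W_\beta}$, i.e. $\mathcal W_\beta=[\mbox{ker}\,\widetilde T^{(\gamma)^*}|_{\mathcal W_\beta}]_{T^{(\gamma)}}$. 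The proof of Proposition \ref{P21} already records that $\widetilde T^{(\gamma)^*}$ maps $\mathcal W_\beta$ into $E_\gamma\otimes\mathcal W_\beta$; feeding this into the orthogonality that defines the restricted kernel yields $\mbox{ker}\,\widetilde T^{(\gamma)^*}|_{\mathcal W_\beta}=\mathcal W_\beta\cap\mbox{ker}\,\widetilde T^{(\gamma)^*}=\mathcal W_\alpha$. Hence $\mathcal W_\beta=[\mathcal W_\alpha]_{T^{(\gamma)}}$.

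The crux is then the commutation identity
$$[[\mathcal V]_{T^{(\gamma)}}]_{T_\beta}=[\mathcal V]_{T_\alpha}$$
for any $\sigma(\mathcal M)$-invariant subspace $\mathcal V$; taking $\mathcal V=\mathcal W_\alpha$ and combining with the previous step gives $\mathcal H=[\mathcal W_\beta]_{T_\beta}=[[\mathcal W_\alpha]_{T^{(\gamma)}}]_{T_\beta}=[\mathcal W_\alpha]_{T_\alpha}$, which is the generating statement. One inclusion is immediate: $[\mathcal V]_{T_\alpha}$ is invariant under every $T^{(\alpha_i)}$, hence contains $[\mathcal V]_{T^{(\gamma)}}$ and, being invariant under the $T^{(\beta_j)}$ as well, contains $[[\mathcal V]_{T^{(\gamma)}}]_{T_\beta}$. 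For the reverse inclusion I must verify that $[[\mathcal V]_{T^{(\gamma)}}]_{T_\beta}$ is itself $(\sigma,T^{(\gamma)})$-invariant; then, containing $\mathcal V$ and being invariant under all of $T^{(\alpha_1)},\ldots,T^{(\alpha_n)}$, it contains $[\mathcal V]_{T_\alpha}$. This invariance is where the product-system relation \eqref{rep} enters: commuting $\widetilde T^{(\gamma)}$ to the right through $\widetilde T_{\mathbf m}^{\beta}$ by repeated use of \eqref{rep} rewrites $\widetilde T^{(\gamma)}(I_{E_\gamma}\otimes\widetilde T_{\mathbf m}^{\beta})$ as $\widetilde T_{\mathbf m}^{\beta}(I_{\mathbb E(\mathbf m)}\otimes\widetilde T^{(\gamma)})(t\otimes I_{\mathcal H})$ for a shuffle unitary $t$ assembled from the $t_{\gamma,\beta_j}$, and since $[\mathcal V]_{T^{(\gamma)}}$ is $T^{(\gamma)}$-invariant by construction, $\widetilde T^{(\gamma)}$ carries $E_\gamma\otimes\mathfrak L_{\mathbf m}^{\beta}([\mathcal V]_{T^{(\gamma)}})$ back into $\mathfrak L_{\mathbf m}^{\beta}([\mathcal V]_{T^{(\gamma)}})$; passing to closed linear spans gives the invariance. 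I expect this interchange to be the main obstacle, since it requires tracking the shuffle unitaries carefully.

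Finally I would confirm that $\mathcal W_\alpha$ is genuinely wandering for $(\sigma,T^{(\alpha_1)},\ldots,T^{(\alpha_n)})$: given $\mathbf m\in\mathbb N_0^{\alpha}\setminus\{0\}$, choose $j$ with $m_{\alpha_j}\geq1$; moving the $\alpha_j$-factors of $\widetilde T_{\mathbf m}^{\alpha}$ to the front via \eqref{rep} exhibits $\mathfrak L_{\mathbf m}^{\alpha}(\mathcal W_\alpha)\subseteq\mbox{ran}\,\widetilde T^{(\alpha_j)}$, while $\mathcal W_\alpha\subseteq\mbox{ker}\,\widetilde T^{(\alpha_j)^*}=(\mbox{ran}\,\widetilde T^{(\alpha_j)})^{\bot}$, so $\mathcal W_\alpha\perp\mathfrak L_{\mathbf m}^{\alpha}(\mathcal W_\alpha)$. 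Together with the generating identity this establishes the generating wandering subspace property with $\mathcal W_\alpha$ as the wandering subspace, completing the induction.
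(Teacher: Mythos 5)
Your proof is correct and follows essentially the same route as the paper: the base case is the hypothesis applied to $\mathcal K_i=\mathcal H$, and the key step $[\mathcal W_\alpha]_{T^{(\gamma)}}=\mathcal W_{\alpha\setminus\{\gamma\}}$ is obtained, exactly as in the paper, from Proposition \ref{P21} together with the hypothesis applied to the $(\sigma,T^{(\gamma)})$-reducing subspace $\mathcal W_{\alpha\setminus\{\gamma\}}$. The paper iterates this identity to peel off one index at a time rather than phrasing it as an induction on $\#\alpha$, and it absorbs your ``commutation identity'' into its definition of $[\,\cdot\,]_{T_\alpha}$ as the smallest invariant subspace, but these are only organizational differences.
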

\begin{proof}
First, we shall show  that $\mathcal{W}_{\alpha}=\bigcap_{i=1}^{n}\mathcal{W}_{\alpha_i},$ where $\mathcal{W}_{\alpha_i}=\mbox{ker}\widetilde{T}^{(\alpha_i)^*}$ is the wandering subspace for  the covariant representation $(\sigma, T^{(\alpha_1)}, \dots, T^{(\alpha_n)}).$  Let $w_1,w_2 \in \mathcal{W}_{\alpha} $ and $\mathbf{m}=(m_{\alpha_1}, \cdots ,m_{\alpha_n}) \in \mathbb{N}_0^{\alpha}\setminus\{0\}.$ Without loss of generality we can assume that $m_{\alpha_1} \neq 0.$  Then
\begin{align*}
\langle w_1, \widetilde{T}^{\alpha}_{\mathbf{m}}(\eta_{\alpha} \otimes w_2) \rangle &=\langle w_1, \widetilde{T}^{(\alpha_1)}_{m_1}(I_{E^{\otimes m_1}_{ \alpha_1}} \otimes \widetilde{T}^{\beta}_{\mathbf{m}-m_1 \mathbf{e_1}}) (\eta_{\alpha} \otimes w_2)\rangle \\
&=\langle \widetilde{T}^{(\alpha_1)^*}_{m_1}w_1,  (I_{E^{\otimes m_1}_{ \alpha_1}} \otimes \widetilde{T}^{\beta}_{\mathbf{m}-m_1\mathbf{e_1}}) (\eta_{\alpha} \otimes w_2) \rangle=0,
\end{align*}
where $\beta=\alpha \setminus \{ \alpha_1\},$ $
\eta_{\alpha} \in \mathbb E(\mathbf{m})$ and $\mathbf{e_i} \in \mathbb{N}_0^{\alpha}$ whose $\alpha_i^{\mbox{th}}$ entry is 1 and all other entries are $0.$ 
 Next,  we  need to show that $\mathcal{H}=\bigvee_{\mathbf{m} \in \mathbb{N}_0^{\alpha}}\mathfrak{L}^{\alpha}_{\mathbf{m}}(\mathcal{W}_{\alpha}).$
  Suppose that $\alpha=\{i\},$  then  by hypothesis   $\mathcal{W}_i$ is a generating wandering subspace for $(\sigma, T^{(i)}), i =1, \ldots, k.$ Now for $ \#\alpha \geq 2,$ it is enough to show that $[\mathcal{W}_{\alpha} ]_{T^{(\alpha_i)}}=\mathcal{W}_{\alpha \setminus \{\alpha_i\}}$ for any $\alpha_i \in \alpha.$ Because for $\alpha_i,\alpha_j \in \alpha,$ one can repeat the procedure  and  obtain
 \begin{align*}
 [\mathcal{W}_{\alpha} ]_{T_{\{\alpha_i, \alpha_j\}}} &= \bigvee_{\mathbf{m} \in \mathbb{N}^2_0}\mathfrak{L}^{\{\alpha_i, \alpha_j\}}_{\mathbf{m}}( \mathcal{W}_{\alpha}) = \bigvee_{m_2 \in \mathbb{N}_0}\mathfrak{L}^{(\alpha_i)}_{m_2}([\mathcal{W}_{\alpha}]_{T^{(\alpha_j)}}) \\
 &=[[\mathcal{W}_{\alpha}]_{T^{(\alpha_i)}}]_{T^{(\alpha_j)}}=\mathcal{W}_{\alpha \setminus \{\alpha_i, \alpha_j\}},
 \end{align*}
 and continue this procedure until the set $\alpha \setminus \{\alpha_i, \alpha_j\}$ becomes a singleton set and  apply the assumption for the singleton set.

 To this end, let $\alpha \subseteq I_k, \:  \#\alpha \geq 2$ and $\alpha_i \in \alpha.$ Consider the subspace $F=\mathcal{W}_{\alpha \setminus \{\alpha_i\}} \ominus \widetilde{T}^{(\alpha_i)}(E_{\alpha_i} \otimes \mathcal{W}_{\alpha \setminus \{\alpha_i\}}).$ Now by Proposition \ref{P21}, $\mathcal{W}_{\alpha \setminus \{\alpha_i\}}$ is $(\sigma, T^{(\alpha_i)})$-reducing and therefore  $F=\mathcal{W}_{\alpha \setminus \{\alpha_i\}} \cap \mathcal{W}_{\alpha_i}=\mathcal{W}_{\alpha}.$ On the other hand, since $\mathcal{W}_{\alpha \setminus \{\alpha_i\}}$ is $(\sigma, T^{(\alpha_i)})$-reducing, by hypothesis $F=\mathcal{W}_{\alpha}$ is the wandering subspace for $(\sigma, T^{(\alpha_i)}).$ Hence
  $$ [\mathcal{W}_{\alpha} ]_{T^{(\alpha_i)}}=\mathcal{W}_{\alpha \setminus \{\alpha_i\}}.$$ This completes the proof.
\end{proof}
\begin{corollary}\label{C23}
Let $(\sigma, T^{(1)}, \dots, T^{(k)})$ be a doubly commuting completely bounded, covariant representation of the product system $\mathbb{E}$ on a Hilbert space $\mathcal{H}$ such that  $(\sigma, T^{(1)}, \dots, $ $T^{(k)})$ is analytic  and satisfies one of the following properties:
\begin{enumerate}
\item[(1)] $(\sigma, T^{(i)})$ is concave for each $i=1, \dots ,k,$
\item[(2)]   $\|(I_{E_i} \otimes \wt{T}^{(i)})(\zeta)+\xi\|^2 \leq 2(\|\zeta\|^2+\|\wt{T}^{(i)}(\xi)\|^2),$ for each $\zeta \in E_i^{\otimes 2} \otimes \mathcal{H},\xi \in E_i \otimes \mathcal{H}, i= 1, \dots ,k.$
\end{enumerate}
Then for any non-empty set $\alpha=\{\alpha_1, \dots , \alpha_k\} \subseteq I_k,  \: \mathcal{W}_{\alpha}$ is a generating wandering subspace for $(\sigma, T^{(\alpha_1)}, \dots, T^{(\alpha_k)}).$
\end{corollary}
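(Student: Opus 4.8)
The plan is to derive this corollary directly from Theorem \ref{T22}, whose hypothesis I verify by means of Theorem \ref{MT1}. Concretely, writing $n = \#\alpha$, it suffices to show that for each $i \in \{1,\dots,k\}$ and every $(\sigma, T^{(i)})$-reducing subspace $\mathcal{K}_i$, the subspace $\mathcal{K}_i \ominus \widetilde{T}^{(i)}(E_i \otimes \mathcal{K}_i) = \ker \widetilde{T}^{(i)^*}|_{\mathcal{K}_i}$ is a generating wandering subspace for the restricted single-correspondence representation $(\sigma, T^{(i)})|_{\mathcal{K}_i}$. Once this is in place, Theorem \ref{T22} applies verbatim (the doubly commuting assumption being carried over), and yields that $\mathcal{W}_{\alpha} = \bigcap_{i=1}^{n}(\mathcal{H} \ominus \widetilde{T}^{(\alpha_i)}(E_{\alpha_i} \otimes \mathcal{H}))$ is a generating wandering subspace for $(\sigma, T^{(\alpha_1)}, \dots, T^{(\alpha_n)})$, which is exactly the desired conclusion.

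To verify this, I would fix $i$ and a $(\sigma, T^{(i)})$-reducing subspace $\mathcal{K}_i$, and set $(\rho, V) := (\sigma, T^{(i)})|_{\mathcal{K}_i}$, a covariant representation of the single $C^*$-correspondence $E_i$ on $\mathcal{K}_i$, to which Theorem \ref{MT1} is to be applied. This requires checking two facts. First, $(\rho, V)$ is analytic: as $\mathcal{K}_i$ reduces $(\sigma, T^{(i)})$, each $\widetilde{V}_m$ is the restriction of $\widetilde{T}^{(i)}_m$, so $\widetilde{V}_m(E_i^{\otimes m} \otimes \mathcal{K}_i) \subseteq \widetilde{T}^{(i)}_m(E_i^{\otimes m} \otimes \mathcal{H})$ for every $m$, whence $\bigcap_{m \geq 1}\widetilde{V}_m(E_i^{\otimes m} \otimes \mathcal{K}_i) \subseteq \bigcap_{m \geq 1}\widetilde{T}^{(i)}_m(E_i^{\otimes m} \otimes \mathcal{H}) = \{0\}$ by analyticity of $(\sigma, T^{(i)})$. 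Second, $(\rho, V)$ inherits whichever of the two hypotheses is assumed: since $\mathcal{K}_i$ is reducing, the operators $\widetilde{V}$, $\widetilde{V}_2$ and $I_{E_i} \otimes \widetilde{V}$ are the compressions to the $\mathcal{K}_i$-tensor factors of $\widetilde{T}^{(i)}$, $\widetilde{T}^{(i)}_2$ and $I_{E_i} \otimes \widetilde{T}^{(i)}$, so the concavity inequality \eqref{W1} (under hypothesis (1)), respectively the inequality \eqref{Eq12} (under hypothesis (2)), holds for $(\rho, V)$ merely by restricting the corresponding global inequality to vectors living over $\mathcal{K}_i$. Theorem \ref{MT1} then shows that $(\rho, V)$ admits a Wold-type decomposition, and since it is analytic its wandering subspace $\ker \widetilde{V}^* = \mathcal{K}_i \ominus \widetilde{T}^{(i)}(E_i \otimes \mathcal{K}_i)$ is generating, exactly as required to feed into Theorem \ref{T22}.

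The step that demands the most care is the inheritance of analyticity and of the inequalities \eqref{W1} and \eqref{Eq12} under restriction to a reducing subspace. While conceptually routine, it relies on the reducing property of $\mathcal{K}_i$ forcing every operator appearing in those inequalities to preserve the $\mathcal{K}_i$-tensor factors, so that the restricted estimates are genuine specializations of the global ones and not new inequalities to be proved; this is the point where the precise definition of a reducing subspace for covariant representations must be invoked.
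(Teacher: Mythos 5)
Your proposal is correct and follows essentially the same route as the paper: restrict $(\sigma, T^{(i)})$ to an arbitrary reducing subspace $\mathcal{K}_i$, observe that concavity (resp. inequality \eqref{Eq12}) and analyticity pass to the restriction, invoke Theorem \ref{MT1} to get that $\mathcal{K}_i \ominus \wt{T}^{(i)}(E_i \otimes \mathcal{K}_i)$ is a generating wandering subspace, and then feed this into Theorem \ref{T22}. The only difference is that you spell out the inheritance of analyticity and of the two inequalities under restriction, which the paper asserts without comment; this is a welcome addition rather than a deviation.
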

\begin{proof}
Let $\mathcal{K}_i$ be a $(\sigma , T^{(i)})$-reducing subspace of $\mathcal{H}$ for $1\leq i \leq k.$ Then  the covariant representation $(\sigma, T^{(i)})|_{\mathcal{K}_i}$ also satisfies one of the conditions (1)  and  (2), respectively. Thus by Theorem \ref{MT1},  $\mathcal{K}_i \ominus \wt{T}^{(i)}(E \otimes \mathcal{K}_i)$ is a generating wandering subspace for $(\sigma, T^{(i)})|_{\mathcal{K}_i}.$ Hence from Theorem \ref{T22} the proof follows.
\end{proof}

The following main theorem of the paper is a generalization of \cite[Theorem 2.5]{CDSS14}:

\begin{theorem}\label{T24}
	Let $(\sigma, T^{(1)}, \dots, T^{(k)})$ be a  completely bounded, covariant representation of the product system $\mathbb{E}$ on a Hilbert space $\mathcal{H}$ such that\\
	$\|(I_{E_i} \otimes \wt{T}^{(i)})(\zeta)+\xi\|^2 \leq 2(\|\zeta\|^2+\|\wt{T}^{(i)}(\xi)\|^2), \:  \zeta \in E_i^{\otimes 2} \otimes \mathcal{H},\xi \in E_i \otimes \mathcal{H}$ \\or $(\sigma, T^{(i)})$ is concave for all $i= 1, \dots ,k.$ Then
	\begin{enumerate}
		\item $(\sigma, T^{(1)}, \dots, T^{(k)})$  is doubly commuting, and
		\item $(\sigma, T^{(i)})$ is analytic for all $i= 1, \dots ,k.$
	\end{enumerate} if and only if
	\begin{enumerate}
		\item[(a)] For any non-empty set $\alpha=\{\alpha_1, \dots , \alpha_n\} \subseteq I_k,  \: \mathcal{W}_{\alpha}$ is a generating wandering subspace for $(\sigma, T^{(\alpha_1)}, \dots, T^{(\alpha_n)}),$  and for $n \geq 2 ,\:  [\mathcal{W}_{\alpha} ]_{{T}^{(\alpha_i)}}=\mathcal{W}_{\alpha \setminus \{\alpha_i\}},$ for all $\alpha_i \in \alpha,$
		\item[(b)] $(I_{E_j} \otimes \wt{T}^{(i)})(t_{i ,j} \otimes I_{\mathcal{H}})(I_{E_i} \otimes \wt{T}^{(j)^*}\wt{T}^{(j)})=\wt{T}^{(j)^*}\wt{T}^{(j)}(I_{E_j} \otimes \wt{T}^{(i)})(t_{i, j} \otimes I_{\mathcal{H}})$ for all $1 \leq i<j\leq k.$
	\end{enumerate}
\end{theorem}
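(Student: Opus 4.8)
The plan is to prove the two implications separately, relying on the Wold-type machinery of Section~\ref{Sec3} (Theorem~\ref{MT1}, Corollary~\ref{C23}) and on the multivariable wandering subspace result Theorem~\ref{T22}, while keeping the standing hypothesis --- each $(\sigma, T^{(i)})$ is concave or satisfies Inequality~\eqref{Eq12} --- active throughout. It is convenient to abbreviate $C_{i,j} := (I_{E_j} \otimes \wt{T}^{(i)})(t_{i,j} \otimes I_{\mathcal{H}})$, so that the product-system relation \eqref{rep} reads $\wt{T}^{(i)}(I_{E_i} \otimes \wt{T}^{(j)}) = \wt{T}^{(j)} C_{i,j}$, the doubly commuting relation \eqref{doubly} reads $\wt{T}^{(j)^*}\wt{T}^{(i)} = C_{i,j}(I_{E_i} \otimes \wt{T}^{(j)^*})$, and condition (b) reads $C_{i,j}(I_{E_i} \otimes \wt{T}^{(j)^*}\wt{T}^{(j)}) = \wt{T}^{(j)^*}\wt{T}^{(j)} C_{i,j}$.

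For the forward implication I assume (1) and (2). Since $(\sigma, T^{(i)})$ is analytic and satisfies the standing hypothesis, so does its restriction to any $(\sigma, T^{(i)})$-reducing subspace, whence Theorem~\ref{MT1} makes the canonical wandering subspace of every such restriction generating; thus the hypotheses of Theorem~\ref{T22} are in force. Theorem~\ref{T22} then yields that $\mathcal{W}_\alpha$ is a generating wandering subspace for $(\sigma, T^{(\alpha_1)}, \dots, T^{(\alpha_n)})$, and its proof establishes the identity $[\mathcal{W}_\alpha]_{T^{(\alpha_i)}} = \mathcal{W}_{\alpha \setminus \{\alpha_i\}}$; together these are exactly (a). For (b) I would argue purely algebraically: multiplying the doubly commuting identity $\wt{T}^{(j)^*}\wt{T}^{(i)} = C_{i,j}(I_{E_i} \otimes \wt{T}^{(j)^*})$ on the right by $I_{E_i} \otimes \wt{T}^{(j)}$ and substituting \eqref{rep} in the form $\wt{T}^{(i)}(I_{E_i} \otimes \wt{T}^{(j)}) = \wt{T}^{(j)} C_{i,j}$ turns the left-hand side into $\wt{T}^{(j)^*}\wt{T}^{(j)} C_{i,j}$, while the right-hand side becomes $C_{i,j}(I_{E_i} \otimes \wt{T}^{(j)^*}\wt{T}^{(j)})$; equating them is precisely (b).

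For the converse I assume (a) and (b). Conclusion (2) is immediate: applying (a) to the singleton $\alpha = \{i\}$ shows that $\mathcal{W}_i$ is a generating wandering subspace for $(\sigma, T^{(i)})$, that is, $\mathcal{H} = \bigvee_{m \geq 0} \mathfrak{L}_m^{(i)}(\mathcal{W}_i)$; since $(\sigma, T^{(i)})$ meets the standing hypothesis, Theorem~\ref{MT1} supplies the Wold-type decomposition $\mathcal{H} = \bigvee_{m \geq 0} \mathfrak{L}_m^{(i)}(\mathcal{W}_i) \oplus \bigcap_{m \geq 1}\wt{T}^{(i)}_m(E_i^{\otimes m} \otimes \mathcal{H})$, and comparing the two forces $\bigcap_{m \geq 1}\wt{T}^{(i)}_m(E_i^{\otimes m} \otimes \mathcal{H}) = \{0\}$, which is the analyticity of $(\sigma, T^{(i)})$.

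The main work is conclusion (1). Fixing distinct $i, j$ (it suffices to treat $i<j$, since the reverse ordering of \eqref{doubly} is its adjoint and $t_{i,j}^{-1} = t_{j,i}$), I would set $D := \wt{T}^{(j)^*}\wt{T}^{(i)} - C_{i,j}(I_{E_i} \otimes \wt{T}^{(j)^*})$ on $E_i \otimes \mathcal{H}$ and aim to show $D = 0$. Running in reverse the computation used above for (b) --- multiplying (b) by $I_{E_i} \otimes \wt{T}^{(j)}$ and using \eqref{rep} --- gives $D(I_{E_i} \otimes \wt{T}^{(j)}) = 0$, so $D$ annihilates $\overline{\mbox{ran}(I_{E_i} \otimes \wt{T}^{(j)})}$, which by the identity $\mbox{ker}(I_{E_i} \otimes \wt{T}^{(j)^*}) = E_i \otimes \mathcal{W}_j$ (the Skalski-type fact already used in Proposition~\ref{P21}) is the orthogonal complement of $E_i \otimes \mathcal{W}_j$. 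It then remains to evaluate $D$ on $E_i \otimes \mathcal{W}_j$: for $w \in \mathcal{W}_j$ one has $(I_{E_i} \otimes \wt{T}^{(j)^*})(\xi_i \otimes w) = 0$, so $D(\xi_i \otimes w) = \wt{T}^{(j)^*} T^{(i)}(\xi_i) w$. Here the decisive input is the second clause of (a): $[\mathcal{W}_{\{i,j\}}]_{T^{(i)}} = \mathcal{W}_j$ forces $\mathcal{W}_j$ to be $(\sigma, T^{(i)})$-invariant, so that $T^{(i)}(\xi_i) w \in \mathcal{W}_j = \mbox{ker}\, \wt{T}^{(j)^*}$ and hence $D(\xi_i \otimes w) = 0$. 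As $D$ is bounded and vanishes on both summands of $E_i \otimes \mathcal{H} = \overline{\mbox{ran}(I_{E_i} \otimes \wt{T}^{(j)})} \oplus (E_i \otimes \mathcal{W}_j)$, we obtain $D = 0$, i.e. \eqref{doubly}. I expect this final step to be the crux: it is exactly the $(\sigma, T^{(i)})$-invariance of $\mathcal{W}_j$, delivered by the second clause of (a), that upgrades the intertwining relation (b) --- which by itself only controls $D$ on the range of $I_{E_i}\otimes\wt{T}^{(j)}$ --- to the full doubly commuting identity.
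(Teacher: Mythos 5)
Your proof is correct. The forward implication and the analyticity half of the converse coincide with the paper's own argument: the standing hypothesis passes to restrictions to $(\sigma,T^{(i)})$-reducing subspaces, so Theorem \ref{MT1} verifies the hypothesis of Theorem \ref{T22} and yields (a), while (b) is the same two-line algebraic consequence of \eqref{doubly} and \eqref{rep}; conversely, (a) for singletons combined with the Wold-type decomposition of Theorem \ref{MT1} forces $\bigcap_{m\geq 1}\wt{T}^{(i)}_m(E_i^{\otimes m}\otimes\mathcal H)=\{0\}$. Where you genuinely diverge is in deriving double commutativity from (a) and (b). The paper fixes $i$, uses the generating property $[\mathcal W_i]_{T^{(i)}}=\mathcal H$ to approximate an arbitrary $h$ by finite sums $\sum_m \wt{T}^{(i)}_m(\eta_{m,n}\otimes h_{m,n})$ with $h_{m,n}\in\mathcal W_i$, checks the relation term by term using (b) and the $(\sigma,T^{(j)})$-invariance of $\mathcal W_i$, and passes to the limit. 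You instead form the defect $D=\wt{T}^{(j)^*}\wt{T}^{(i)}-(I_{E_j}\otimes\wt{T}^{(i)})(t_{i,j}\otimes I_{\mathcal H})(I_{E_i}\otimes\wt{T}^{(j)^*})$ and split its domain as $E_i\otimes\mathcal H=\overline{\mathrm{ran}(I_{E_i}\otimes\wt{T}^{(j)})}\oplus(E_i\otimes\mathcal W_j)$, using $\ker(I_{E_i}\otimes\wt{T}^{(j)^*})=E_i\otimes\mathcal W_j$: condition (b) together with \eqref{rep} kills $D$ on the first summand, and the $(\sigma,T^{(i)})$-invariance of $\mathcal W_j$, read off from $[\mathcal W_{\{i,j\}}]_{T^{(i)}}=\mathcal W_j$ in clause (a), kills it on the second. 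This is a cleaner and slightly more economical route: it avoids the density-and-limit argument entirely, and it isolates exactly which parts of (a) are doing what --- the generating property is needed only for analyticity, while double commutativity already follows from (b) plus the single invariance identity $[\mathcal W_{\{i,j\}}]_{T^{(i)}}=\mathcal W_j$.
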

\begin{proof}
	The necessary part follows from Theorem \ref{T22}. Indeed,   for $1 \leq i < j \leq k,$   by the  assumption of the doubly commutativity,  we have
	\begin{align*}
	(I_{E_j} \otimes \wt{T}^{(i)})(t_{i, j} \otimes I_{\mathcal{H}})(I_{E_i} \otimes \wt{T}^{(j)^*}\wt{T}^{(j)}) &= \wt{T}^{(j)^*}\wt{T}^{(i)}(I_{E_i} \otimes\wt{T}^{(j)}) \\
	\* &=\wt{T}^{(j)^*}\wt{T}^{(j)}(I_{E_j} \otimes \wt{T}^{(i)})(t_{i, j} \otimes I_{\mathcal{H}}).
	\end{align*}
	
	Conversely, suppose that (a) and (b) hold. First we shall show that $(\sigma, T^{(1)}, \dots,$ $ T^{(k)})$  is doubly commuting. Let $1 \leq i \leq k$ be fixed. By assumption(a), $\mathcal{W}_i=[\mathcal{W}_{\{i,j\}}]_{T^{(j)}}$ for all $1 \leq i \neq j \leq k.$  It follows  that $\mathcal{W}_i$ is $(\sigma, T^{(j)})$-invariant subspace for all $1 \leq j \neq i\leq k.$ Let $h \in \mathcal{H},$ since $[\mathcal{W}_i]_{T^{(i)}}= \mathcal{H},$ thus there exists a sequence $h_n$ converging to $h$ such that $$h_n=\sum_{m=0}^{N_n}\wt{T}^{(i)}_{m}(\eta_{m,n} \otimes h_{m,n}), \:\: \: \eta_{m,n} \in E_i^{\otimes m}, h_{m,n} \in \mathcal{W}_i \: \mbox{and } \: N_n \in \mathbb{N}_0.$$
	Then for any $1 \leq j \neq i \leq k \: \mbox{and}\: \eta_j \in E_j,$  we obtain
	\begin{align*}\wt{T}^{(i)^*}\wt{T}^{(j)}(\eta_j \otimes h_{n})&=\sum_{m=0}^{N_n}\wt{T}^{(i)^*}\wt{T}^{(j)}(\eta_j \otimes \wt{T}^{(i)}_{m}(\eta_{m,n} \otimes h_{m,n}))\\&=\sum_{m=1}^{N_n}\wt{T}^{(i)^*}\wt{T}^{(j)}(\eta_j \otimes \wt{T}^{i}_{m}(\eta_{m,n} \otimes h_{m,n})),\end{align*}
	since $\mathcal{W}_i=\mbox{ker}\wt{T}^{(i)^*}.$ On the other hand, using notation $\eta \otimes h$ for $\eta_{m,n} \otimes h_{m,n}$ we have
	\begin{align*}
	&(I_{E_i} \otimes \wt{T}^{(j)})(t_{j , i} \otimes I_{\mathcal{H}})(I_{E_j} \otimes \wt{T}^{(i)^*})(\eta_j \otimes h_{n})
	\\
	%=& \sum_{m=0}^{N_n}(I_{E_i} \otimes \wt{T}^{(j)})(t_{j,i} \otimes I)(I_{E_j} \otimes \wt{T}^{(i)^*})(\eta_j \otimes \wt{T}^{i}_{m}(\eta_{m,n} \otimes h_{m,n})) \\
	=& \sum_{m=1}^{N_n}(I_{E_i} \otimes \wt{T}^{(j)})(t_{j,i} \otimes I_{\mathcal{H}})(I_{E_j} \otimes \wt{T}^{(i)^*})(\eta_j \otimes \wt{T}^{(i)}_{m}(\eta \otimes h))   \\ = &\sum_{m=1}^{N_n}(I_{E_i} \otimes \wt{T}^{(j)})(t_{j,i} \otimes I_{\mathcal{H}})(I_{E_j} \otimes \wt{T}^{(i)^*}\wt{T}^{(i)})(I_{E_j \otimes E_i} \otimes \wt{T}^{(i)}_{m-1})(\eta_j \otimes \eta \otimes h)  \\
	=&\sum_{m=1}^{N_n}\wt{T}^{(i)^*}\wt{T}^{(j)}(I_{E_j} \otimes \wt{T}^{(i)})(I_{E_j \otimes E_i} \otimes \wt{T}^{(i)}_{m-1})(\eta_j \otimes \eta \otimes h) \\
	=&\sum_{m=1}^{N_n}\wt{T}^{(i)^*}\wt{T}^{(j)}(\eta_j \otimes \wt{T}^{i}_{m}(\eta \otimes h)),
	\end{align*}
	where the $3^{rd}$ equality follows from (b). It implies $$\wt{T}^{(i)^*}\wt{T}^{(j)}(\eta_j \otimes h_{m,n})= (I_{E_i} \otimes \wt{T}^{(j)})(t_{j,i} \otimes I_{\mathcal{H}})(I_{E_j} \otimes \wt{T}^{(i)^*})(\eta_j \otimes h_{m,n})$$ and then  by taking limit  we obtain $$\wt{T}^{(i)^*}\wt{T}^{(j)}(\eta_j \otimes h)=(I_{E_i} \otimes \wt{T}^{(j)})(t_{j,i} \otimes I_{\mathcal{H}})(I_{E_j} \otimes \wt{T}^{(i)^*})(\eta_j \otimes h).$$ Thus we have (1).
	
	Finally, from  Theorem \ref{MT1} we have that $$\mathcal{H}=[\mathcal{W}_i]_{T^{(i)}} \: \bigoplus \:  \bigcap_{m \geq 1} \wt{T}_m^{(i)}(E_i^{\otimes m} \otimes \mathcal{H}), $$  for all $1 \leq i \leq k.$ But by part (a), $[\mathcal{W}_i]_{T^{(i)}}=\mathcal{H}$ for all $1 \leq i \leq k.$ Thus $\bigcap_{m \geq 1} \wt{T}_m^{(i)}(E_i^{\otimes m} \otimes \mathcal{H})$=\{0\} for all $1 \leq i \leq k.$ 
\end{proof}

% ------------------------------------------------------------------------

\subsection*{Acknowledgment}
Shankar V. is grateful to The LNM Institute of Information Technology for providing research facility and warm hospitality during a visit in March 2019. Shankar V. is supported by CSIR Fellowship (File No: 09/115(0782)/2017-EMR-I).

% ------------------------------------------------------------------------
\end{document}